\documentclass{article}

% \setlength{\topmargin}{30mm}
% \addtolength{\topmargin}{-1in}
% \setlength{\oddsidemargin}{15mm}
% \addtolength{\oddsidemargin}{-1in}
% \setlength{\evensidemargin}{20mm}
% \addtolength{\evensidemargin}{-1in}
% \setlength{\textwidth}{170mm}
% \setlength{\textheight}{244mm}
% \setlength{\headsep}{0mm}
% \setlength{\headheight}{0mm}
% \setlength{\topskip}{0mm}

\usepackage{amsmath}
\usepackage{amssymb}
\usepackage{mathrsfs}
\usepackage{amscd}
\usepackage{amsthm}
\usepackage{xypic}
\usepackage{latexsym}
\usepackage[dvips]{graphicx}
\newtheorem{thm}{Theorem}[section]
\newtheorem{prop}[thm]{Proposition}
\newtheorem{cor}[thm]{Corollary}
\newtheorem{lem}[thm]{Lemma}
\theoremstyle{definition}
\newtheorem{rmk}[thm]{Remark}
\newtheorem{exam}[thm]{Example}

\newcommand{\AFp}{\mathbb{A}_{\mathbb{F}_q}^1}
\newcommand{\PFp}{\mathbb{P}_{\mathbb{F}_q}^1}
\newcommand{\Ak}{\mathbb{A}_k^1}

\newcommand{\Z}{\mathbb{Z}}
\newcommand{\Fq}{{\mathbb{F}_q}}
\newcommand{\Fqr}{{\mathbb{F}_{q^r}}}
\newcommand{\Fp}{{\mathbb{F}_p}}
\newcommand{\Zoverpn}{{\mathbb{Z}/p^n}}
\newcommand{\WnFq}{{W_n(\mathbb{F}_q)}}
\newcommand{\Wnk}{{W_n(k)}}
\newcommand{\Ql}{\mathbb{Q}_\ell}
\newcommand{\Qlb}{{\overline{\mathbb{Q}}_\ell}}
\newcommand{\Gam}{\Gamma}
\newcommand{\sS}{\mathscr{S}}

\newcommand{\sX}{\mathscr{X}}

\newcommand{\COX}{{\mathcal{C}(\mathcal{O}_X)}}
\newcommand{\COY}{{\mathcal{C}(\mathcal{O}_Y)}}
\newcommand{\tGa}{\tilde{\Gamma}}
\newcommand{\ta}{\tilde{a}}
\newcommand{\tu}{\tilde{u}}
\newcommand{\tz}{\tilde{z}}
\newcommand{\tK}{\tilde{K}}

\newcommand{\cA}{\mathcal{A}}
\newcommand{\cC}{\mathcal{C}}

\newcommand{\cF}{\mathcal{F}}
\newcommand{\cG}{\mathcal{G}}
\newcommand{\cI}{\mathcal{I}}
\newcommand{\cK}{\mathcal{K}}

\newcommand{\cO}{\mathcal{O}}
\newcommand{\cX}{\mathcal{X}}

\newcommand{\cGb}{{\overline{\mathcal{G}}}}
\newcommand{\cGo}{\mathcal{G}_0}
\newcommand{\cGob}{{\overline{\mathcal{G}}_0}}
\newcommand{\id}{\mathrm{id}}
\newcommand{\Xb}{\overline{X}}
\newcommand{\Yb}{\overline{Y}}
\newcommand{\fb}{\overline{f}}
\newcommand{\gb}{\overline{g}}
\newcommand{\Xob}{\overline{X}_0}
\newcommand{\Gamb}{{\overline{\Gam}}}
\newcommand{\Gamob}{{\overline{\Gam}_0}}
\newcommand{\Fr}{\mathrm{Fr}}
\newcommand{\red}{\mathrm{red}}
\newcommand{\pr}{\mathrm{pr}}
\newcommand{\adj}{\mathrm{adj}}
\newcommand{\vP}{\varPhi}
\newcommand{\vp}{\varphi}
\newcommand{\al}{\alpha}
\newcommand{\bt}{\beta}
\newcommand{\sg}{\sigma}
\newcommand{\La}{\Lambda}
\newcommand{\ab}{\overline{a}}
\newcommand{\aib}{\overline{a}_1}
\newcommand{\azb}{\overline{a}_2}
\newcommand{\aoib}{\overline{a}_{01}}
\newcommand{\aozb}{\overline{a}_{02}}
\newcommand{\Gob}{{\overline{G}_0}}
\newcommand{\Gb}{\overline{G}}

\newcommand{\ub}{\overline{u}}

\newcommand{\jGa}{{j_\Gamma}}

\newcommand{\Spec}{\mathop{\mathrm{Spec}}\nolimits}
\newcommand{\Hom}{\mathop{\mathrm{Hom}}\nolimits}

\newcommand{\Ker}{\mathop{\mathrm{Ker}}\nolimits}
\newcommand{\Coker}{\mathop{\mathrm{Coker}}\nolimits}
\newcommand{\RGa}{\mathop{R\Gam}\nolimits}
\newcommand{\RGac}{\mathop{R\Gam _c}\nolimits}
\newcommand{\Tr}{\mathop{\mathrm{Tr}}\nolimits}
\newcommand{\Fix}{\mathop{\mathrm{Fix}}\nolimits}

\newcommand{\naiveloc}{\mathop{\mathrm{naive\mathchar`- loc}}\nolimits}
\newcommand{\Res}{\mathop{\mathrm{Res}}\nolimits}
\newcommand{\Dperf}{\mathop{D_\mathrm{perf}}\nolimits}
\newcommand{\Dctfb}{\mathop{D_\mathrm{ctf}^b}\nolimits}
\newcommand{\ord}{\mathop{\mathrm{ord}}\nolimits}

\newcommand{\cln}{\colon}

\newcommand{\isomto}{\xrightarrow{\sim}}

\newcommand{\immto}{\hookrightarrow}
\newcommand{\smin}{\smallsetminus}
\newcommand{\ts}{\mathop{\otimes}}

\newcommand{\tbl}{\textbullet}

\def\maru#1{{\ooalign{\hfil
  \ifnum#1>999 \resizebox{.25\width}{\height}{#1}\else%
  \ifnum#1>99 \resizebox{.33\width}{\height}{#1}\else%
  \ifnum#1>9 \resizebox{.5\width}{\height}{#1}\else #1%
  \fi\fi\fi%
\/\hfil\crcr%
\raise.167ex\hbox{\mathhexbox20D}}}}

\title{Deligne's conjecture on the Lefschetz trace formula for $p^n$-torsion \'etale cohomology}
\author{Megumi Takata}
\date{\today}

\begin{document}

\maketitle

\begin{abstract}
Deligne's conjecture is the Lefschetz trace formula for correspondences defined over a finite field.
In this paper, we prove an analogous statement of Deligne's conjecture
with respect to $p^n$-torsion \'etale cohomology under certain conditions,
where $p$ is the characteristic of the base field.
\end{abstract}

\tableofcontents

\begin{section}{Introduction}
In abstract algebraic geometry, we have the Lefschetz trace formula for correspondences of proper smooth schemes,
which is a generalization of the Lefschetz fixed-point formula.
The trace formula does not hold when schemes are not proper or smooth.
However, in the positive characteristic case, we have a nice Lefschetz trace formula
for correspondences twisted by a sufficiently large power of the Frobenius endomorphism.
It was conjectured by Deligne, and proved by Fujiwara \cite{Fu} under the most general situation.
This trace formula is an important tool in arithmetic geometry.
For example, it appears in 
the proof of the local Langlands correspondence for $\mathit{GL}_n$ over a $p$-adic field by Harris-Taylor \cite{HT}, and
the Langlands correspondence for $\mathit{GL}_n$ over a function field by Lafforgue \cite{Laf}.

We first recall Deligne's conjecture.
We fix a prime number $p$ and take a prime number $\ell \neq p$.
Let $q$ be a power of $q$, $\Fq$ the finite field with $q$ elements and $k$ an algebraic closure of $\Fq$.
Let $X$ and $\Gam$ be separated $k$-schemes of finite type
and $a\cln \Gam \to X\times X$ a morphism of $k$-schemes.
We assume that $X$, $\Gam$ and $a$ are defined over $\Fq$.
We put $a_1=\pr_1\circ a$ and $a_2=\pr_2\circ a$,
where $\pr_1$ (resp. $\pr_2$) is the first (resp. second) projection of $X\times_k X$.
We denote by $\Fr_X$ the $q$-th power Frobenius endomorphism on $X$.
We write $a^{(m)}$ for the correspondence such that $a^{(m)}_1 = \Fr_X^m\circ a_1$ and $a^{(m)}_2 = a_2$.
We write $D_c^b(X,\Qlb)$ for the derived category of bounded complexes of $\Qlb$-sheaves with constructible cohomology
(for more detail of this category, see for example \cite{Ek}).
\begin{thm}[Deligne's conjecture]\label{thm1}
We assume that $a_1$ is proper and $a_2$ is quasi-finite.
Then there exists an integer $N$ depending only on $X$, $\Gam$ and $a$ such that,
for any integer $m \geq N$, any object $K\in D_c^b(X,\Qlb)$ and any morphism $u\in\Hom(a_1^{(m)*}K,a_2^!K)$,
the equation
\[
	\Tr(u_! \mid \RGac(X,K)) = \sum_{z\in\Fix a^{(m)}} \naiveloc_z(u)
\]
holds.
\end{thm}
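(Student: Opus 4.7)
The plan is to follow Fujiwara's strategy, reducing the statement on the non-proper scheme $X$ to a global Lefschetz--Verdier trace formula on a suitable compactification, then analyzing the local contributions via a Frobenius contraction argument. First I would choose a proper compactification $j\cln X \immto \Xb$, together with a compactification $\Gamb$ of $\Gam$ and a morphism $\ab\cln \Gamb \to \Xb \times \Xb$ extending $a$; since $\Xb$ is proper both $\ab_1$ and $\ab_2$ are automatically proper. Extending $K$ by $\Kb = j_! K$, any morphism $u\in\Hom(a_1^{(m)*}K,a_2^!K)$ extends canonically to a cohomological correspondence $\ub$ on $\Kb$ along $\ab^{(m)}$, and the global Lefschetz--Verdier formula on $\Xb$ then rewrites $\Tr(u_! \mid \RGac(X,K))$ as the sum of local terms over the fixed-point scheme $\Fix\ab^{(m)}$.

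The crucial step is to show that, for $m$ large enough depending only on the geometry of $(X,\Gam,a)$, the fixed-point scheme splits as $\Fix a^{(m)} \sqcup F_\partial$, where $F_\partial$ is supported on $\Xb \smin X$ and the sum of local terms attached to $F_\partial$ vanishes. This is the Frobenius contraction heart of the argument: after compactifying, the $q^m$-th power Frobenius makes the correspondence $\ab^{(m)}$ strongly contracting on formal/henselian neighborhoods of $\Xb \smin X$, so that for $m$ large no closed point of the boundary can be a genuine fixed point, and even the non-reduced or higher-dimensional components of $\Fix \ab^{(m)}$ meeting the boundary contribute zero local terms. Concretely, I would work in the rigid analytic tube of $\Xb \smin X$ (Berkovich or Raynaud style) and apply a quasi-finite contraction estimate: the quasi-finiteness of $a_2$ combined with the $q^m$-dilation of Frobenius forces the image of the tube under $\ab^{(m)}$ to shrink strictly into the interior for $m$ sufficiently large, killing the boundary local terms by a nearby-cycles vanishing argument.

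Simultaneously, at an interior fixed point $z \in \Fix a^{(m)}$, the quasi-finiteness of $a_2$ together with the Frobenius twist in $a_1^{(m)}$ forces $z$ to be an isolated fixed point whose formal neighborhood has contracting behavior; in this situation the Lefschetz--Verdier local term specializes to the naive expression, namely the trace of the endomorphism of $K_z$ induced by $u$ and the identification $a_1^{(m)*}K_z \simeq K_z \simeq a_2^! K_z$. Checking this identification reduces to a computation with the trace map on stalks, where the quasi-finite étale-locally structure of $a_2$ makes $a_2^!$ behave like $a_2^*$ shifted. Putting the two computations together yields the stated equality.

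The main obstacle is the contraction lemma at the boundary, together with the uniformity of the constant $N$. One must produce $N$ depending only on $X$, $\Gam$, $a$ and not on $K$ or $u$, which means all estimates on fixed-point loci and local terms have to be purely geometric; this is exactly the point at which Fujiwara's rigid-geometric input becomes essential, and any replacement (such as Varshavsky's specialization approach or Pink's curve reduction) must furnish the same uniformity. The remaining ingredients, namely the Lefschetz--Verdier machinery and the manipulation of cohomological correspondences under $j_!$ extension, are standard but require care to ensure compatibility with the naive local term definition.
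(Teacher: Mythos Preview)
Your sketch is a reasonable high-level outline of Fujiwara's rigid-geometric proof of Deligne's conjecture, but there is nothing to compare it against: the paper does not prove Theorem~\ref{thm1}. This theorem is stated in the introduction purely as background, with the attribution ``It was conjectured by Deligne, and proved by Fujiwara \cite{Fu} under the most general situation.'' The paper's own contribution is Theorem~\ref{thm12}, the $p^n$-torsion analogue, proved by entirely different methods (Artin--Schreier, the Woods Hole formula for coherent sheaves, and the linear algebra of Section~\ref{sect5}); none of the rigid-analytic contraction machinery you describe appears anywhere in the paper.

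As a secondary remark on the sketch itself: the outline captures the architecture of Fujiwara's argument, but several steps are stated more optimistically than the actual proof warrants. The claim that boundary fixed points simply disappear for large $m$ is not quite right---the delicate point is not the absence of boundary fixed points but the vanishing of their local terms, which requires the full strength of the contraction lemma in rigid geometry. Likewise, the assertion that at interior fixed points the Lefschetz--Verdier local term ``specializes to the naive expression'' because ``$a_2^!$ behaves like $a_2^*$ shifted'' glosses over the main technical content: $a_2$ is only quasi-finite, not \'etale, so this identification is not automatic and is exactly where the Frobenius twist is needed to force the local term to agree with $\naiveloc_z(u)$. These are genuine gaps in the exposition, though not in the overall strategy.
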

In the above equation, $u_!$ is defined by the composition
\[
	\RGac(X,K) \xrightarrow{\adj} \RGac(\Gam,a_1^{(m)*}K) \xrightarrow{u}
		\RGac(\Gam,a_2^!K) \xrightarrow{\adj} \RGac(X,K)
\]
and $\naiveloc_z(u)$ is defined by the trace of the composition
\[
	K_w = (a_1^{(m)*}K)_z \xrightarrow{u_z} (a_2^!K)_z
		\immto \bigoplus_{z'\in a_2^{-1}(w)}(a_2^!K)_{z'} = (a_{2!}a_2^!K)_w
		\xrightarrow{\adj} K_w,
\]
where we put $w = a_1^{(m)}(z) = a_2(z)$.

In this paper, we will show an analogous statement to Theorem \ref{thm1}
for \'etale cohomology of $p^n$-torsion sheaves.
Let $X_0$ and $\Gam_0$ be separated $\Fq$-schemes of finite type
and $a_0\cln \Gam_0 \to X_0\times_{\Fq}X_0$ an $\Fq$-morphism.
We write $\vP_{X_0}$ for the $p$-th power map on the structure sheaf of $X_0$.
Let $G_0$ be a constructible $\Z/p^n$-sheaf on $X_0$.

\begin{thm}\label{thm12}
We assume $a_1$ is proper and $a_2$ is \'etale.
Then there exists an integer $N$ depending on $X_0$, $\Gam_0$, $a_0$ and $G_0$
such that, for any integer $m\geq N$ and any $u_0 \in \Hom(a_{01}^{(m)*}G_0,a_{02}^*G_0)$, we have
\[
	\Tr (u_! \mid \RGac (X, G)) = \sum_{z \in \Fix a^{(m)}} \Tr(u_z \mid G_z)
\]
under one of the following conditions:
\begin{itemize}
\item[{\rm(i)}]
	(Corollary \ref{cor32})
	\begin{itemize}
	\item[\tbl] $n=1$,
	\item[\tbl] $a_2 = \id_X$ and $a_1$ is an automorphism of finite order,
	\item[\tbl] $\ord u = \ord a_1$.
	\end{itemize}
\item[{\rm(ii)}]
	(Theorem \ref{thm41})
	\begin{itemize}
	\item[\tbl] $n=1$,
	\item[\tbl] $a_0$ is a restriction of a correspondence $\ab_0\cln \Gamob \to \Xob \times_{\Fq} \Xob$ such that
			$\Xb$ and $\Gamb$ are proper smooth $k$-schemes,
			$X_0$ (resp. $\Gam_0$) is an open $\Fq$-subscheme of $\Xob$ (resp. $\Gamob$),
			$\ab$ is a closed immersion, $\ab_2$ is \'etale and $\Xb \smin X$ is a Cartier divisor,
	\item[\tbl] $G_0$ is the pull-back of a smooth constructible $\Fp$-sheaf on $\Xob$.
	\end{itemize}
\item[{\rm(iii)}]
	(Theorem \ref{thm51})
	\begin{itemize}
	\item[\tbl] $\ab$ is a closed immersion,
	\item[\tbl] $\Xb$ and $\Gamb$ are proper smooth $k$-schemes,
	\item[\tbl] there exists a lift $(\sX_0,\tGa_0,\ta_0,\vP_{\sX_0})$ of $(X_0,\Gam_0,a_0,\vP_{X_0})$ to $\WnFq$,
	\item[\tbl] $H^i(X,G)$ (resp. $H^i(\sX,G\ts_{\Z/{p^n}}\cO_\sX)$) is free over $\Z/p^n$ (resp. $\Wnk$) for each $i$.
	\end{itemize}
\end{itemize}
In the case {\rm(i)} and {\rm(ii)}, we can take $N=1$.
\end{thm}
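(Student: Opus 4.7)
The plan is to handle the three cases separately, since each relies on quite different techniques; each corresponds to a later section of the paper.

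For case (i), the distinguishing features are $a_2 = \id_X$, $a_1$ an automorphism of finite order $r$, and $\ord u = r$. I would exploit the finite cyclic symmetry supplied by these hypotheses: iterating $u$ along pullbacks by $a_1$ should yield a $\Z/r$-equivariant structure on $G_0$, allowing the trace of $u_!$ on $\RGac(X,G)$ to be analysed through a cyclic group action rather than through a full trace formula. Since $a_1$ is an automorphism, the fixed points of $a^{(m)}$ correspond to solutions of $\Fr_X^m(a_1(z)) = z$, and the iterated cyclic structure should force the local contribution at each such fixed point to collapse to the naive trace $\Tr(u_z \mid G_z)$. Because only finite cyclic averaging is used and no asymptotic Frobenius argument is invoked, one can take $N = 1$.

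For case (ii), the smooth sheaf $G_0$ extends to a smooth constructible $\Fp$-sheaf $\Gb_0$ on the proper smooth compactification $\Xob$, and the correspondence $a_0$ extends to $\ab_0 : \Gamob \to \Xob \times_\Fq \Xob$. The plan is to apply the classical Lefschetz trace formula on the proper smooth scheme $\Xb$ to the extended correspondence $\ab^{(m)}$ acting on $\Gb$, then handle the boundary $\Xb \smin X$. The trace formula in the proper smooth case with $\Fp$-coefficients can be obtained through the Woods Hole formalism, or by comparison with an $\ell$-adic lift of the sheaf. The key task is to show that for $m \geq 1$ the fixed points of $\ab^{(m)}$ all lie inside $\Gam$, i.e.\ the Frobenius twist excludes fixed points lying on $\Gamb \smin \Gam$. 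The hypotheses that $\ab$ is a closed immersion, $\ab_2$ is \'etale, and $\Xb \smin X$ is a Cartier divisor together rigidify the intersection with the diagonal enough to force this boundary vanishing, allowing $N = 1$.

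For case (iii), the lift $(\sX_0, \tGa_0, \ta_0, \vP_{\sX_0})$ to $\WnFq$ together with the freeness hypotheses strongly suggest comparing mod $p^n$ \'etale cohomology with coherent cohomology on the lift. The plan is first to establish a natural isomorphism $\RGac(X, G) \simeq R\Gamma(\sX, G \ts_{\Z/{p^n}} \cO_\sX)$ compatible with the correspondence $\ta^{(m)}$ and with the Frobenius lift $\vP_{\sX_0}$; then to apply the coherent Lefschetz fixed-point theorem (Woods Hole) on the proper smooth lift $\sX$ to compute the trace of $u_!$ on the coherent side; and finally to show that for $m$ sufficiently large the Frobenius twist makes the intersection of $\ta^{(m)}$ with the diagonal transversal enough that the coherent local terms reduce to $\Tr(u_z \mid G_z)$. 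The main obstacle, which dictates the choice of $N$, will be this last step: Woods Hole a priori produces local terms involving completed local rings of $\sX$, and reducing them to the naive \'etale local terms requires the Frobenius pullback to annihilate all higher-order local data. The freeness hypotheses are what guarantee that no $p$-torsion issues obstruct the comparison isomorphism or the formation of the trace. I expect case (iii) to be the most technically demanding of the three.
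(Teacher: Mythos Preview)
Your overall three-part plan matches the paper's structure, but each case misses the key mechanism the paper actually uses.

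\textbf{Case (i).} The paper does not use cyclic equivariance or averaging. Instead it uses a descent trick (the Deligne--Lusztig method): after reducing to affine pieces where $G_0$ is smooth, one passes to $X_1 = X_0 \otimes_{\Fq} \Fqr$ and uses $g_0 \otimes \sigma^{-1}$ as a descent datum to construct a new $\Fq$-form $X_0'$ of $X$ for which $g^{(1)}$ \emph{becomes} the Frobenius endomorphism and $u\circ\Fr$ becomes the Frobenius correspondence on a descended sheaf $G_0'$. Then the already-known Frobenius trace formula (Theorem~\ref{thm211}) applies directly. Your proposed cyclic-action analysis is too vague to constitute a proof, and it is not clear how it would yield the local terms without essentially reproving the Frobenius case.

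\textbf{Cases (ii) and (iii): the missing bridge.} In both cases you are missing the central device that links mod-$p^n$ \'etale cohomology to coherent cohomology: the Artin--Schreier(--Witt) sequence
\[
0 \to \Z/p^n \to \cO_\sX \xrightarrow{\,1-\vP\,} \cO_\sX \to 0
\]
(or, in case (ii), its twist $0 \to j_!G \to \cG' \xrightarrow{1-\vP} \cG' \to 0$ with $\cG' = \cI\cdot(\Gb\otimes\cO_{\Xb})$). This gives, after taking cohomology and using Lemma~\ref{lem311}(1), a short exact sequence identifying $H^i_c(X,G)$ with $H^i(\Xb,\cG')^{1-\vP}$. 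The trace comparison is then the linear-algebra Lemma~\ref{lem313}, which says $\Tr(\vp\circ F^m \mid M^{1-\vP}) = \Tr(\vp\circ F^m \mid M)$ for $m \geq N$. Your case (iii) sketch asserts an isomorphism $\RGac(X,G) \simeq R\Gamma(\sX, G\otimes\cO_\sX)$; no such isomorphism exists in general, and the correct relation is the $(1-\vP)$-fixed-part identification above, which is precisely why the freeness hypotheses and the integer $N$ enter.

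\textbf{Case (ii): the boundary.} Your claim that for $m\geq 1$ all fixed points of $\ab^{(m)}$ lie in $\Gam$ is false in general; the paper explicitly treats fixed points $z \in \Fix\ab^{(1)} \smin \Fix a^{(1)}$. What makes the formula work is not absence of boundary fixed points but vanishing of their local terms: because $\cG' = \cI\cGb$ and $(u'\circ\Fr)(\ab_1^{(1)*}\cG') \subset \ab_2^*\cI^q\cGb \subset \ab_2^*\cI\cG'$, the stalk $(u'\circ\Fr)_z$ factors through $(\ab_2^*\cI\cG')_z \hookrightarrow (\ab_2^*\cG')_z$, and this inclusion is zero when $\ab_2(z)\in\Xb\smin X$. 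The Cartier-divisor hypothesis is used to make $\cG'$ locally free (so Woods Hole applies), not to exclude boundary fixed points.
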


The first case for $a_1=\id$ is already known (\cite[Fonction $L$ mod.\ $\ell^n$]{SGA4h}).
By using the method in \cite{DL}, we extend the result to the case where $a_1$ is an automorphism of finite order.

For the second and last cases, we generalize the proof of the trace formula for the Frobenius correspondence.
The outline of the proof is as follows.
First, by tensoring with a structure sheaf,
we transform the left-hand side of the formula in Theorem \ref{thm12}
into a trace of an endomorphism of cohomology groups with respect to coherent sheaves.
After that, we apply certain trace formula called ``Woods Hole formula'' to this term.
Then the term transforms into a sum of traces of stalks of a homomorphism of coherent sheaves at fixed points.
Finally, by a calculation of the stalks, we have the result.

We devote Section\ \ref{sect2} to recalling the terminology of cohomological operations and correspondences.
In Section\ \ref{sect3} we prove Theorem \ref{thm12} in the case (i).
In Section\ \ref{sect4} we prove the Woods Hole formula which is a Lefschetz trace formula for coherent sheaves.
In Section\ \ref{sect5} we recall some linear algebra to prove the cases (ii) and (iii).
Under the preliminaries above, we prove Theorem \ref{thm12} in the case (ii) (resp. the case (iii))
in Section\ \ref{sect6} (resp. Section\ \ref{sect7}).
Finally, we show an example such that the trace formula holds, an application to elliptic curves, and a counterexample for $p$-adic coefficients in Section\ \ref{sect8}.
\end{section}

\section*{Notation}
\begin{itemize}
\item
	In this paper, we fix a prime number $p$.
	We denote by $\Fq$ the finite field of $q$ elements where $q$ is a power of $p$
	and fix an algebraic closure $k$ of $\Fq$.
	For any field $K$ of characteristic $p$, we write $W_n(K)$ for the ring of Witt vectors of length $n$ over $K$. 
% \item
% 	If a $\Wnk$-scheme $X$ is defined over $\WnFq$, we denote by $X_0$ a scheme over $\WnFq$ such that $X_0\otimes_\WnFq \Wnk \isomto X$.
% 	For an $\WnFq$-morphism $f_0$ of schemes (resp. a sheaf $G_0$ on $X_0$),
% 	we denote by $f$ (resp. $G$) the pull-back of $f_0$ to $\Wnk$ (resp. $G_0$ to $X$).
\item
	For any $\WnFq$-object $\cX_0$, we write $\cX = \cX_0 \ts_{\WnFq} \Wnk$ for the base change of $\cX_0$ by $\WnFq \to \Wnk$.
\item
	Let $X_0$ be a $\Fq$-scheme.
	We write $\Fr_{X_0} \cln X_0 \to X_0$ for the Frobenius endomorphism on $X_0$,
	which is the pair of the identity on the underlying space of $X_0$ and the $q$-th power map on the structure sheaf of $X_0$.
	We write $\Fr_X$ for the base change of $\Fr_{X_0}$ by $\Fq \to k$.	
\item
	Let $f_0 \cln X_0 \to Y_0$ and $a_0 \cln \Gam_0 \to X_0\times_{\Fq} X_0$ be morphisms of $\Fq$-schemes.
	We put $f_0^{(m)} = \Fr_{Y_0}^m \circ f_0 = f_0 \circ \Fr_{X_0}^m$ and $a_0^{(m)} = (a_{01}^{(m)},a_{02})$,
	where we write $a_{01}$ (resp. $a_{02}$) for the composition of $a_0$ and the first (resp. second) projection of $X_0 \times_{\Fq} X_0$.
	We also define $f^{(m)}$ and $a^{(m)}$ in the same way.
\item
	Let $A$ be a ring.
	We denote by $D(A)$ the derived category of the category of $A$-modules.
	We denote by $\Dperf (A)$ the full subcategory of $D(A)$ which consists of perfect complexes,
	that is, complexes which are quasi-isomorphic to bounded complexes of projective $A$-modules of finite type.
\item
	Let $X$ be a scheme and $\cA$ a sheaf of rings on $X$.
	We denote by $D(X,\cA)$ the derived category of the category of $\cA$-module sheaves on $X$.
\item
	We denote by $\ts_\cA^L \cln D^-(X,\cA) \times D^-(X,\cA) \to D^-(X,\cA)$ the derived tensor product functor.
	If $\cF$ is a flat $\cA$-module, we write $\ts_\cA\cF$ instead of $\ts_\cA^L\cF$.
\item
	We denote by $\Dperf (X,\cA)$ the full subcategory of $D(X,\cA)$ which consists of perfect complexes,
	that is, complexes which are locally quasi-isomorphic to bounded complexes of locally free $\cA$-modules of finite type.
\item
	If $\cA$ is a constant sheaf $\La$ whose value is a noetherian ring,
	we denote by $\Dctfb(X,\La)$ the category of complexes of finite tor-dimension with constructible cohomology
	(see for example \cite[Rapport]{SGA4h}).
\item
	If $\cA = \cO_X$, we simply write $D(\cO_X)$ (resp. $\Dperf(\cO_X)$) for $D(X,\cO_X)$ (resp. $\Dperf(X,\cO_X)$).
\end{itemize}

\subsection*{Acknowledgment}
It is a great pleasure for the author to thank Professor Yoichi Mieda. 
He suggested many things about the problem,
checked the preliminary manuscripts carefully,
and pointed out some mistakes in it.

The author is obliged to professor Yuichiro Taguchi for
inviting the author warmly into his office
and giving valuable comments for this paper.

\begin{section}{Cohomological correspondences}\label{sect2}

In the section, we recall notation of cohomological operations and correspondences.

\begin{subsection}{Cohomological operations}
\begin{subsubsection}{Sheaves of $\La$-modules}\label{sss211}
Let $\La$ be a noetherian torsion ring and $S$ a spectrum of an artinian local ring.
Let $f\cln X\to Y$ be a morphism of separated schemes of finite type over $S$.

We denote by $\cC(X,\La)$ (resp. $\cC(Y,\La)$) the category of \'etale sheaves of $\La$-modules on $X$ (resp. $Y$). 
We denote the direct (resp. inverse) image functor associated to $f$
by $f_*\cln\cC(X,\La)\to\cC(Y,\La)$ (resp. by $f^*\cln\cC(Y,\La)\to\cC(X,\La)$).
%When we also use the inverse image functor $(\cO_Y\text{-Mod.})\to(\cO_X\text{-Mod.})$,
Since $f^*$ is an exact functor, it induces a functor from $D(Y,\La)$ to $D(X,\La)$ which we also denote by $f^*$.
Since $f_*$ is a left exact functor and $f$ is finite-dimensional,
it induces a right derived functor $Rf_* \cln D(X,\La) \to D(Y,\La)$.
We often denote $Rf_*$ by $f_*$ for simplicity.
When $f$ is the structure morphism, we denote $Rf_*$ by $\RGa(X,\bullet)$.

For any complex $K\in D(X,\La)$, we write $\adj \cln K\to f_*f^*K$ for the adjunction map.

Let $j\cln X\immto\Xb$ be an open immersion.
We write $j_!\cln \cC(X,\La) \to \cC(\Xb,\La)$ for the extension-by-zero functor.
Since $j_!$ is exact, it induces a functor from $D(X,\La)$ to $D(\Xb,\La)$ which we also denote by $j_!$.

By Nagata's compactification theorem, we can decompose $f\cln X \to Y$ as follows:
\[
\xymatrix{
	X \ar@{^{(}->}[r]^-{j} \ar[rd]_-{f} & \Xb \ar[d]^-{\fb} \\
	& Y,
}
\]
where $\fb$ is a proper morphism and $j$ is an open immersion.
We define the direct image functor with proper support $f_!\cln D(X,\La) \to D(Y,\La)$ by the composite of $j_!$ and $\fb_*$.
When $f$ is the structure morphism of $X$,
we denote $f_!$ by $\RGac(X,\bullet)$. 

The functor $f^*$ sends $\Dctfb(Y,\La)$ to $\Dctfb(X,\La)$.
On the other hand, by \cite[Exp.\ XIV,\ Thm.\ 1.1 and Exp.\ XVII,\ Thm.\ 5.2.10]{SGA4},
the functor $f_!$ sends $\Dctfb(X,\La)$ to $\Dctfb(Y,\La)$.
In particular, if we take $K$ in $\Dctfb(X,\La)$ then $\RGac(X,K)$ belongs to $\Dperf(\La)$.

We assume that $g\cln Y \to X$ is finite and \'etale.
Then, for any abelian \'etale sheaf $G$ on $X$, we can define a morphism
\[
	\Tr_{g,G}\cln g_*g^*G \to G
\]
such that it is compatible with \'etale base change and if $Y$ is a disjoint sum of $d$ copies of $X$
then $\Tr\cln g_*g^*G=G^d\to G$ is the morphism $(x_i)\mapsto\sum_ix_i$,
where $x_1,\ldots,x_d$ are sections of $G$
(see for example \cite[Exp.\ VII, Sect.\ 5]{SGA4}).
Since the functor $g_*g^*$ is exact, we can extend it to a morphism of complexes $\Tr_{g,K}\cln g_*g^*K \to K$.
We note that the morphism $\Tr_{g,K}$ is functorial with respect to $K$.

We remove the assumption that $g$ is finite.
We write $g=\gb\circ j$ for a decomposition of $g$,
where $j\cln Y \immto \Yb$ is an open immersion and $\gb\cln\Yb\to X$ is a proper morphism.
Then $\gb$ is finite \'etale morphism, hence we can define a morphism
\[
	\Tr_{g,K} \cln g_!g^*K \to K
\]
by the composition
\[
	g_!g^*K = \gb_*j_!j^*\gb^*K \xrightarrow{\adj} \gb_*\gb^*K \xrightarrow{\Tr_{\gb,K}} K.
\]

\end{subsubsection}

\begin{subsubsection}{Sheaves of $\cO_X$-modules}
For a scheme $X$, we denote by $\COX$ the category of $\cO_X$-sheaves.
Let $f\cln X\to Y$ be a morphism of separated schemes of finite type over $S$.

We can naturally define the direct image functor
$\cln\COX\to\COY$
and its right derived functor from $D(\cO_X)$ to $D(\cO_Y)$, which we denote by $f_*$.

We define the inverse image functor as follows:
\[
	f^*\cln \COX \to \COY \cln \cG \mapsto f^{-1}\cG \ts_{f^{-1}\cO_Y}\cO_X.
\]
Since $f^*$ is right exact, we can define its left derived functor
$D^-(\cO_Y)\to D^-(\cO_X)$,
which we also denote by $f^*$.
When we treat \'etale sheaves and $\cO_Y$-modules at the same time,
we write $f^{-1}$ for the inverse image functor of \'etale sheaves to avoid confusion.

For any complex $\cK\in D^-(\cO_X)$, we write $\adj\cln \cK\to f_*f^*\cK$ for the adjunction map
which coincides with the composite of the adjunction map $\cK \to f_*f^{-1}\cK$ defined in (\ref{sss211})
and the canonical morphism $f_*f^{-1}\cK \to f_*f^*\cK$.

If $X$ and $Y$ are smooth over $S$ and $f$ is proper,
then by \cite[Exp.\ III, Cor.\ 4.8.1]{SGA6},
$f_*$ sends $\Dperf(\cO_X)$ to $\Dperf(\cO_Y)$.

Let $g\cln Y\to X$ be a finite \'etale morphism.
Then we have the trace map
\[
\Tr_{g,\cO_X}\cln g_*\cO_Y \to \cO_X.
\]
Since $g$ is \'etale and finite, for any complex $\cK$ in $D(\cO_X)$,
we have a canonical isomorphism $\cK\ts_{\cO_X} g_*\cO_Y \isomto g_*g^*\cK$.
Thus, by tensoring $\Tr_{g,\cO_X}$ with $\cK$ and composing the canonical isomorphism, we obtain
\[
	\Tr_{g,\cK} \cln g_*g^*\cK \to \cK.
\]
This morphism is functorial with respect to $\cK$.
\end{subsubsection}

\begin{subsubsection}{Relation between $\Zoverpn$-modules and $\cO_X$-modules}
We suppose that $X$ and $Y$ are smooth over $S$.
Let $f\cln Y \to X$ and $g\cln Y \to X$ be morphisms of schemes over $S$.
We assume that $f$ is proper and $g$ is finite and \'etale.

Let $K$ be a complex in $D(X,\Zoverpn)$ and $\cK$ a complex in $D^-(\cO_X)$.
Let $u\cln K \to \cK$ be a morphism of complexes.
We define the morphism
\[
	f_*f^*u\cln f_*f^{-1}K \to f_*f^*\cK
\]
by the composite of
\[
	f_*f^{-1}u\cln f_*f^{-1}K \to f_*f^{-1}\cK
\]
and the canonical map
\[
	f_*f^{-1}\cK \to f_*f^*\cK.
\]
We also define $g_*g^*u$ by the same way.
\begin{prop}\label{prop21}
The diagrams
\[
\xymatrix{
	K \ar[r]^-{\adj} \ar[d]_{u} & f_*f^{-1}K \ar[d]^{f_*f^*u} \\
	\cK \ar[r]^-{\adj} & f_*f^*\cK
}
\]
and
\[
\xymatrix{
	g_*g^{-1}K \ar[r]^-{\Tr} \ar[d]_{g_*g^*u} & K \ar[d]^{u}\\
	g_*g^*\cK \ar[r]^-{\Tr} & \cK
}
\]
are commutative.
\end{prop}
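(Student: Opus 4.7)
The plan is to factor each of the two squares through the intermediate object built from the \'etale inverse image functor, and then reduce each verification to (a) the naturality of the \'etale structure maps with respect to the morphism $u\colon K\to\cK$ of abelian \'etale sheaves, and (b) a compatibility between the \'etale and $\cO_X$-module versions of the adjunction (respectively the trace) applied to $\cK$. By the definitions recalled just before the proposition, $f_*f^*u$ is the composite of $f_*f^{-1}u$ with the canonical map $f_*f^{-1}\cK\to f_*f^*\cK$, and the $\cO_X$-module adjunction $\cK\to f_*f^*\cK$ is by definition the composite of the \'etale adjunction $\cK\to f_*f^{-1}\cK$ with the same canonical map. Hence the first square decomposes into a left square---which is the naturality of the \'etale unit $\id\to f_*f^{-1}$ applied to $u$, a formal consequence of the $(f^{-1},f_*)$-adjunction---and a trivially commutative right triangle.

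For the second square I would argue in the same style. Since $g_*g^*u$ is by construction $g_*g^{-1}u$ followed by the canonical map $g_*g^{-1}\cK\to g_*g^*\cK$, the diagram splits into a top square expressing the functoriality of the \'etale trace $\Tr_{g,-}$ with respect to the morphism $u$ of abelian \'etale sheaves (this functoriality is recorded in \S\ref{sss211}) and a bottom square asserting that, after pre-composing $\Tr_{g,\cK}$ with the canonical map $g_*g^{-1}\cK\to g_*g^*\cK$, one recovers the \'etale trace on $\cK$ regarded as an abelian \'etale sheaf.

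The main obstacle is exactly this last compatibility. Since $g$ is \'etale and finite, the canonical map $g^{-1}\cO_X\to\cO_Y$ is an isomorphism, so $g^{-1}\cK\to g^*\cK$ is also an isomorphism; under this identification both trace maps are determined by their compatibility with \'etale base change together with the requirement that on a trivialization $Y\cong X\sqcup\cdots\sqcup X$ they reduce to the summation $\cK^{\oplus d}\to\cK$, so they coincide. In the derived setting no additional work is required, because the $\cO_X$-module trace is obtained from $\Tr_{g,\cO_X}$ by the exact operation $-\otimes_{\cO_X}g_*\cO_Y$, and so the sheaf-level identification extends to $D^-(\cO_X)$ componentwise.
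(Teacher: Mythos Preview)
Your argument is correct and structurally matches the paper's: both dispose of the first square by unwinding the definitions of $f_*f^*u$ and of the $\cO_X$-module adjunction, and both reduce the second square (via naturality of the \'etale trace in $u$) to the triangle
\[
\xymatrix{
g_*g^{-1}\cK \ar[r]^-{\Tr} \ar[d] & \cK \\
g_*g^*\cK \ar[ru]_-{\Tr} &
}
\]
The only divergence is in how this triangle is checked. You use that $g^{-1}\cO_X\simeq\cO_Y$ (since $g$ is \'etale) to identify $g^{-1}\cK$ with $g^*\cK$, and then argue that both traces are pinned down by \'etale-local triviality plus the summation formula on a split cover. The paper instead invokes the projection-formula isomorphisms $g_*g^{-1}\cK\simeq\cK\otimes_{\Z/p^n}g_*(\Z/p^n)$ and $g_*g^*\cK\simeq\cK\otimes_{\cO_X}g_*\cO_Y$ and reduces the triangle to the compatibility of $\Tr_{g,\Z/p^n}$ with $\Tr_{g,\cO_X}$ along $\Z/p^n\to\cO_X$, which is immediate from the definitions. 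Your route is slightly more conceptual; the paper's stays closer to the actual definition of $\Tr_{g,\cK}$ given in the text (as $\id_\cK\otimes\Tr_{g,\cO_X}$), so no separate characterization lemma is needed.
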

\begin{proof}
The commutativity of the first diagram follows from the definition of $f_*f^*u$
and $\adj \cln \cK \to f_*f^*\cK$.
For the commutativity of the second, it suffices to show the diagram
\[
\xymatrix{
	g_*g^{-1}\cK \ar[r]^-{\Tr} \ar[d] & \cK \\
	g_*g^*\cK \ar[ru]_-{\Tr} & 
}
\]
is commutative.
This follows from the commutativity of the diagrams
\[
\xymatrix{
	\cK\ts_\Zoverpn g_*\Zoverpn \ar[r]^-{\sim} \ar[d] & g_*g^{-1}\cK \ar[d] \\
	\cK\ts_{\cO_X} g_*\cO_Y \ar[r]^-{\sim} & g_*g^*\cK
}
\]
and
\[
\xymatrix{
	g_*\Zoverpn \ar[r]^-{\Tr} \ar[d] & \Zoverpn \ar[d] \\
	g_*\cO_Y \ar[r]^-{\Tr} & \cO_X.
}
\]
\end{proof}
\end{subsubsection}
\end{subsection}

\begin{subsection}{Cohomological correspondences}
\begin{subsubsection}{Correspondences}
Let $X$ and $\Gam$ be separated schemes of finite type over $S$.
Let $a\cln \Gam \to X\times_S X$ be a $S$-morphism of schemes.
Such a morphism is called a correspondence.
For a correspondence $a$, we denote by $\Fix a$ the fiber product $\Gam\times_{X\times_S X}X$,
that is, $\Fix a$ is given by the cartesian diagram
\[
\xymatrix{
	\Fix a \ar[r] \ar[d] & X \ar[d]^{\Delta_{X/S}} \\
	\Gam \ar[r]^-{a} & X\times_S X,
}
\]
where the morphism $\Delta_{X/S}$ is the diagonal map.
We write $a_1$ (resp. $a_2$)
for the composite of $a$ and the first (resp. second) projection of $X\times_S X$.

In this section, we assume that $a_1$ is proper and $a_2$ is \'etale.

Let $K$ be a complex in $D(X,\La)$.
%We call a morphism $u\in\Hom(a_1^*K,a_2^*K)$ a cohomological correspondence lifting $a$.
We define $u_!$ by the composition
\[
\xymatrixcolsep{2.5pc}\xymatrix{
	\RGac(X,K) \ar[r]^{\RGac(\adj)} & \RGac(\Gam,a_1^*K) \ar[r]^{\RGac(u)}
		& \RGac(\Gam,a_2^*K) \ar[r]^{\RGac(\Tr)} & \RGac(X,K) \lefteqn{.}
}
\]
When the schemes $X$ and $\Gam$ are proper over $S$, we often denote $u_!$ by $u_*$.

For any geometric point $z$ of $\Fix a$,
we put $K_z = (a_1^*K)_z = (a_2^*K)_z$.
Then the stalk
\[
	u_z\cln K_z = (a_1^*K)_z \to (a_2^*K)_z = K_z
\]
is an endomorphism on $K_z$.
If $K$ is a complex in $\Dctfb(X,\La)$ then $K_z$ lies in $\Dperf(\La)$.
Hence we can consider the trace $\Tr(u_z \mid K_z)$, which is an element of $\La$.

Now, we assume that $X$ and $\Gam$ are proper over $S$.
In this case, $a_2$ is finite.

For any complex $\cK$ in $D^-(\cO_X)$ and $u\in\Hom(a_1^*\cK,a_2^*\cK)$,
we can also define the endomorphism $u_*$ on $\RGa(X,\cK)$ in the same way.

Let $\cK$ be a complex in $D^-(\cO_X)$ and $u$ a morphism in $\Hom(a_1^*\cK,a_2^*\cK)$.
For any connected component $\bt$ of $\Fix a$,
we put $\cK_\bt = i_\bt^*a_1^*\cK = i_\bt^*a_2^*\cK$ and
\[
	u_\bt=i_\bt^*u \cln \cK_\bt = i_\bt^*a_1^*\cK \to i_\bt^*a_2^*\cK = \cK_\bt,
\]
where we write $i_\bt$ for the closed immersion $\bt \immto \Gam$.

We assume that $X$, $\Gam$ and $\Fix a$ are smooth over $S$.
If $\cK$ is a complex in $\Dperf(\cO_X)$,
by \cite[Exp.\ III,\ Cor.\ 4.5.1, Rem.\ 4.6.2 and Exp.\ VII,\ Prop.\ 1.9]{SGA6},
we have $\cK_\bt = i_\bt^* a_2^* \cK_\bt \in \Dperf(\cO_\bt)$.
So we can consider the trace $\Tr(u_\bt \mid \cK_\bt)$, which is an element of $\Gam(\bt,\cO_\bt)$.
\end{subsubsection}

\begin{subsubsection}{Push-forward for correspondences}
Let $X$ and $\Gam$ be separated $S$-schemes of finite type.
We consider open immersions $j_X\cln X\immto\Xb$ and $j_\Gam\cln \Gam\immto\Gamb$,
and correspondences $a\cln \Gam \to X$ and $\ab\cln \Gamb \to \Xb$ such that the diagram
\[
\xymatrix{
	\Gam \ar[r]^-{a} \ar@{^(->}[d]_{j_\Gam} & X\times_S X \ar@{^(->}[d]^{j_X\times j_X} \\
	\Gamb \ar[r]^-{\ab} & \Xb\times_S \Xb
}
\]
is commutative.
We assume that $a_1$ and $\aib$ are proper.
Let $K$ be a complex in $D(X,\La)$.
Then we define the morphism
\[
	BC_2 \cln \jGa_!a_2^*K \to \azb^*j_{X!}K
\]
by the composition
\[
	\jGa_!a_2^*K \xrightarrow{\jGa_!a_2^*(\adj)} \jGa_!a_2^*j_X^*j_{X!}K = \jGa_!\jGa^*\azb^*j_{X!}K \xrightarrow{\adj} \azb^*j_{X!}K.
\]
By the properness of $a_1$ and $\aib$, we also have the morphism
\[
	j_{X!}K \xrightarrow{j_{X!}(\adj)} j_{X!}a_{1*}a_1^*K = j_{X!}a_{1!}a_1^*K = {\aib}_!\jGa_!a_1^*K = {\aib}_*\jGa_!a_1^*K.
\]
Thus, by adjointness, we can define the morphism
\[
	BC_1 \cln \aib^*j_{X!}K \to \jGa_!a_1^*K.
\]
For any $u\in\Hom(a_1^*K,a_2^*K)$, we define $j_!u\in\Hom(\aib^*j_{X!}K,\azb^*j_{X!}K)$ by the composition
\[
\xymatrix{
	 \aib^*j_{X!}K \ar[r]^{BC_1} & \jGa_!a_1^*K \ar[r]^{\jGa_!u} & \jGa_!a_2^*K \ar[r]^{BC_2} & \azb^*j_{X!}K.
}
\]
\end{subsubsection}

\begin{subsubsection}{Frobenius correspondences}
Let $X_0$ be a $\Fq$-scheme and $G_0$ a sheaf on $X_0$.
Then we have an canonical isomorphism
\[
	\Fr_{G_0} \cln \Fr_{X_0}^*G_0 \isomto G_0.
\]
We call it the Frobenius correspondence of $G_0$
(see \cite[Rapport, 1.2]{SGA4h}).
The Frobenius correspondence is functorial, that is,
for any morphism $f_0\cln X_0 \to Y_0$ of $\Fq$-schemes and any sheaf $G_0$ on $Y_0$,
the pull-back $f_0^*\Fr_{G_0}$ is the Frobenius correspondence of $f_0^*G_0$.
For any integer $m\geq 1$, we put
\[
\Fr_{G_0}^m = \Fr_{G_0}\circ\Fr_{X_0}^*\Fr_{G_0}\circ \cdots \circ \Fr_{X_0}^{(m-1)*}\Fr_{G_0}.
\]

We define an isomorphism $\Fr_G \cln \Fr_X^*G \isomto G$ as the pull-back of $\Fr_{G_0}$ by $X \to X_0$
and also call it the Frobenius correspondence of $G$.
Note that $\Fr_G$ is functorial as well as $\Fr_{G_0}$.
We define $\Fr_G^m$ similarly to $\Fr_{G_0}^m$.

By abuse of notation, we often write $\Fr$ for $\Fr_{G_0}$ or $\Fr_G$.

Since $\Fr$ is isomorphic, we can replace the morphism $u_0$ in Theorem \ref{thm12}
by the morphism $u_0 \circ \Fr^m$ where $u_0 \in \Hom(a_{01}^*G_0,a_{02}^*G_0)$.
When we will state our result later, we use the replaced notation.
\end{subsubsection}

\end{subsection}
\end{section}

\begin{section}{The case of an automorphism of finite order}\label{sect3}
First, we recall the Lefschetz trace formula for the Frobenius correspondence
which is proved in \cite[Fonction $L$ mod.\ $\ell$]{SGA4h}.
After that, we extend this formula to the case of an automorphism of finite order
by using the method in the proof of \cite[Proposition 3.3]{DL}.

\begin{thm}[\rm{\cite[Fonction $L$ mod.\ $\ell$, Thm. 4.1]{SGA4h}}]\label{thm211}
Let $X_0$ be a separated scheme of finite type over $\Fq$,
and $A$ a noetherian reduced ring of characteristic $p$.
Then for any object $K_0$ in $\Dctfb(X_0,A)$ and any integer $m \geq 1$, we have
\[
	\Tr((\Fr_K^m)_! \mid \RGac(X,K)) = \sum_{x\in \Fix(\Fr_X^m\times\id)} \Tr((\Fr_K^m)_x \mid K_x).
\]
\end{thm}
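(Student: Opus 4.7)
The plan is to prove the formula by dévissage on $K_0$, exploiting that both sides are additive in distinguished triangles of constructible $A$-sheaves. On the left, $\RGac(X, \cdot)$ is triangulated and the trace of an endomorphism on a distinguished triangle of perfect $A$-complexes is additive. On the right, the stalk functor is exact, and the trace is likewise additive. Combined with noetherian induction on the support of $K_0$, this will reduce the problem to a manageable base case.

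First, the standard dévissage in $\Dctfb(X_0, A)$ (via distinguished triangles $j_! j^* K_0 \to K_0 \to \iota_{0*} \iota_0^* K_0$ for $j$ the inclusion of a dense open and $\iota_0$ its complementary closed immersion) reduces the problem to the case $K_0 = \iota_{0*} L_0$, where $\iota_0 \cln Z_0 \immto X_0$ is a closed immersion of an integral subscheme and $L_0$ is locally constant constructible on a dense open of $Z_0$. Since $\Fix(\Fr_X^m) \cap Z = \Fix(\Fr_Z^m)$ and $\iota_{0*}$ is compatible both with $\RGac$ and with the Frobenius correspondence, the formula on $X_0$ for $\iota_{0*} L_0$ is equivalent to the formula on $Z_0$ for $L_0$. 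By Nagata compactification together with a further dévissage along the boundary, one may additionally assume that $X_0$ is proper.

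It then remains to verify
\[
	\Tr((\Fr_L^m)_! \mid \RGac(X, L)) = \sum_{x \in X_0(\Fq^m)} \Tr((\Fr^m)_x \mid L_x)
\]
for $L_0$ locally constant on a proper $X_0$. The plan here is induction on $\dim X_0$: the zero-dimensional case is a direct computation on each point, and for positive dimension, de Jong's alteration reduces to the smooth projective case admitting a Lefschetz pencil over $\mathbb{P}^1$, and the Leray spectral sequence for this pencil completes the induction. The main obstacle will be this base case for mod-$p$ coefficients: standard Poincaré duality and K\"unneth are weaker than in the $\ell$-adic setting, so the classical $\ell$-adic trace-formula strategy must be adapted. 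Since $A$ is reduced of characteristic $p$ and the formula is compatible with base change of coefficient rings, it suffices to treat $A = \Fp$, where the Artin--Schreier sequence $0 \to \Fp \to \cO_X \xrightarrow{F - \id} \cO_X \to 0$ provides a bridge to coherent cohomology and makes the Frobenius trace accessible to Woods Hole-type techniques such as those developed later in the paper.
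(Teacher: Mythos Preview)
The paper does not prove this statement: Theorem~\ref{thm211} is quoted verbatim from \cite[Fonction $L$ mod.\ $\ell$, Thm.\ 4.1]{SGA4h} and used as a black box in the proof of Corollary~\ref{cor32}. There is therefore no ``paper's own proof'' to compare your proposal against.

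That said, a brief remark on your sketch. The d\'evissage in the first paragraph is standard and correct. The second paragraph, however, takes a detour that is both unnecessary and anachronistic: de~Jong's alterations postdate SGA~4$\frac12$ by two decades, and Lefschetz pencils together with Leray are tailored to the $\ell$-adic setting where one has Poincar\'e duality and smooth base change in full strength. For $A$ of characteristic $p$ those tools are not available in the form you would need, as you yourself note. The actual argument in SGA~4$\frac12$ goes directly via the Artin--Schreier sequence you mention only at the very end: after reducing to $A=\Fp$ and to a locally constant sheaf, one tensors with $\cO_X$, uses the exact sequence $0\to\Fp\to\cO_X\xrightarrow{1-\vP}\cO_X\to0$ to pass to coherent cohomology, and applies the Woods Hole fixed-point formula for coherent sheaves. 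This is precisely the strategy the present paper generalizes in Sections~\ref{sect4}--\ref{sect7}. Your final sentence thus points at the right idea, but the alteration/pencil machinery before it should be discarded.
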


\begin{cor}\label{cor32}
Let $X_0$ be a separated scheme of finite type over $\Fq$
and $g_0$ an automorphism of order $r$ on $X_0$.
Then for any constructible $\Fp$-module $G_0$ on $X_0$,
any integer $m \geq 1$ and any morphism $u_0 \in \Hom(g_0^*G_0, G_0)$ of order $r$, we have
\[
	\Tr((u\circ \Fr^m)_! \mid \RGac(X,G)) = \sum_{x\in \Fix(g^{(m)}\times\id)} \Tr((u\circ\Fr^m)_x \mid G_x).
\]
\end{cor}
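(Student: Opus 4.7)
The plan is to reduce the statement to the Frobenius case (Theorem \ref{thm211}) by an $r$-fold iteration argument in the spirit of \cite[Proposition 3.3]{DL}.

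First, I would establish the algebraic identity $(u\circ\Fr^m)^r = \Fr^{mr}$ as morphisms $(\Fr^{mr})^*G \to G$. This rests on three ingredients: $g$ commutes with $\Fr$, so $(g\Fr^m)^r = g^r\Fr^{mr}$; $g^r = \id$; and the hypothesis that $u$ has order $r$, meaning that the composition $u \circ g^*u \circ \cdots \circ (g^{r-1})^*u$ equals $\id_G$. The same identity then holds as an endomorphism of $\RGac(X,G)$, and at each stalk $G_x$ for $x\in\Fix(g\Fr^m)\subseteq\Fix\Fr^{mr}$. Applying Theorem \ref{thm211} to the Frobenius correspondence $\Fr^{mrn}$ for every $n\geq 1$ therefore yields
\[
\Tr\bigl((u\Fr^m)^{rn}_!\mid \RGac(X,G)\bigr)\ =\ \sum_{x\in\Fix\Fr^{mrn}}\Tr\bigl((u\Fr^m)^{rn}_x\mid G_x\bigr);
\]
that is, the desired trace formula is already known for every iterate of $u\Fr^m$ whose exponent is a multiple of $r$.

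The remaining task is to recover the formula for $u\Fr^m$ itself from this knowledge of its $(rn)$-th iterates. This is where the method of \cite{DL} enters: one regards the cyclic group $\langle u\Fr^m\rangle/\langle\Fr^{mrn}\rangle$ as acting $\Fp$-linearly on $\RGac(X,G)$ and on each stalk $G_x$ for $x\in\Fix\Fr^{mrn}$, and simultaneously on the finite set $\Fix\Fr^{mrn}$ whose $u\Fr^m$-fixed subset is exactly $\Fix(g\Fr^m)$. A character-theoretic averaging over this cyclic group then isolates the contribution of $u\Fr^m$ on both sides of the formula simultaneously.

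I expect the main obstacle to lie precisely in this last step. Standard orthogonality of characters for a cyclic group of order $r$ requires invertibility of $r$ in the coefficient ring, which fails in $\Fp$ when $p\mid r$. Overcoming this is the technical heart of the argument: one must exploit the mod-$p$ Frobenius identity $\Tr(A)=\Tr(A^p)$ valid for any $\Fp$-linear endomorphism $A$, together with the freedom to let $n$ vary, to perform the averaging without ever dividing by $r$.
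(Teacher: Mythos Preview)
Your approach is quite different from the paper's, and it is not the method of \cite[Proposition~3.3]{DL} either. The actual Deligne--Lusztig argument, which the paper follows, is a \emph{descent} trick. After reducing to $m=1$ by enlarging $\Fq$, and stratifying $X_0$ into $g_0$-stable affine locally closed pieces on which $G_0$ is smooth (so that both sides of the formula decompose as sums over the strata), one passes to $X_1 = X_0 \otimes_{\Fq} \Fqr$ and uses $g_0 \otimes \sigma^{-1}$ as a Galois descent datum to build a new $\Fq$-form $X_0'$ of $X$ for which $g^{(1)}$ \emph{is} the relative Frobenius. The smoothness of $G_0$ allows one to descend it similarly to a sheaf $G_0'$ on $X_0'$ so that $u \circ \Fr$ becomes the Frobenius correspondence of $G$ with respect to $G_0'$. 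Theorem~\ref{thm211} then applies directly, with no iteration and no averaging.

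Your iteration-and-averaging proposal, by contrast, leaves the hardest step open. You correctly identify that orthogonality of characters fails when $p \mid r$, and you gesture at the identity $\Tr(A^p) = \Tr(A)$ in $\Fp$ as a remedy, but you do not carry this out. Writing $r = p^a r'$ with $\gcd(r',p)=1$, even granting that this identity lets you replace $r$ by $r'$ (which already requires an orbit argument on the right-hand side to show that the extra fixed points of $(g\Fr^m)^{p^a}$ contribute zero modulo $p$), you would still need to extend coefficients to something like $\Fp[\zeta_{r'}]$ to perform the averaging and to argue that knowing the trace formula for all multiples of $r'$ pins it down for the first iterate. None of this is in your write-up. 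The descent argument bypasses these difficulties entirely.
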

\begin{proof}
By changing the field of definition of $X$, we may assume that $m=1$.

Since $X_0$ is separated of finite type over $\Fq$ and $g_0$ is of finite order,
we can construct a finite partition ${X_{0i}}$ of $X_0$ such that
$X_{0i}$ is locally closed in $X_0$, affine, $g_0$-stable for each $i$, and $G_0$ is smooth on $X_{0i}$.
Then we have
\[
	\Tr((u\circ\Fr)_! \mid \RGac(X,G)) = \sum_i\Tr((u\circ \Fr)_! \mid \RGac(X_i,G_i))
\]
and
\[
	\sum_{x\in \Fix(g^{(1)}\times\id)} \Tr((u\circ\Fr)_x \mid G_x)
		= \sum_i\sum_{x\in\Fix(g_i^{(1)}\times\id)} \Tr((u\circ\Fr)_x \mid G_x),
\]
where $g_i$ (resp. $G_i$) is the restriction of $g$ (resp. $G$) to $X_i$.
Hence we may assume that $X_0$ is affine and $G_0$ is smooth.

We put $X_1=X_0\ts_\Fq\Fqr$, $G_1=G_0|_{X_1}$ and $u_1=u_0|_{X_1}$.
We denote by $\sg$ the isomorphism on the scheme $\Spec \Fqr$ associated to the $q$-th power map on $\Fqr$.
Since $X_1$ is affine, by using $g_0\ts\sg^{-1}$ as a descent datum,
we can construct the scheme $X_0'$ over $\Fq$ such that
$X_0'\ts_\Fq\Fqr \simeq X_1$ and $g^{(1)}$ is the relative Frobenius endomorphism of $X$ with respect to $X_0$.
Since $G_1$ is smooth, we can also construct the sheaf $G_0'$ on $X_0'$ such that $G_0'|_{X_1} \simeq G_1$
and $u\circ\Fr$ is equal to the Frobenius correspondence on $G$ with respect to $G_0'$.
Therefore by Theorem \ref{thm211} we obtain
\[
	\Tr((u\circ\Fr)_! \mid \RGac(X,G)) = \sum_{x\in \Fix(g^{(1)}\times\id)} \Tr((u\circ\Fr)_x \mid G_x).
\]
\end{proof}
\end{section}

\begin{section}{Woods Hole formula}\label{sect4}

In this section we prove a trace formula for coherent sheaves called ``Woods Hole formula''.
We use this theorem in Section \ref{sect6} and \ref{sect7}.

\begin{thm}\label{thm31}
Let $S$ be a spectrum of an artinian local ring.
Let $X$ and $\Gam$ be proper smooth schemes over $S$.
We denote by $\pi_X$ the structure morphism of $X$.
Let $a\cln\Gam\immto X\times_S X$ be a closed immersion over $S$. 
Assume that $a_2$ is \'etale and
the homomorphism 
$da_1\cln a_1^*\Omega_{X/S}\to\Omega_{\Gam/S}$
induced by $a_1$ is zero.
Then for any object $\cK$ in $\Dperf(\cO_X)$ and $u\in\Hom(a_1^*\cK,a_2^*\cK)$, we have
\[
	\Tr(u_* \mid \RGa(X,\cK)) = \sum_{\bt\in\pi_0(\Fix a)}\Tr_{\bt/S}(\Tr(u_\bt\mid \cK_\bt)).
\]
% In the above equation, $u_\bt$ is the composite
% $i_\bt^*a_1^*L \to i_\bt^*a_2^! \isomfrom i_\bt^*a_2^*L$
% where $i_\bt$ is the closed immersion $\bt\immto\Gam$.
\end{thm}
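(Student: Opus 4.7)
The plan is to reduce both sides of the identity to a local trace computation at the fixed points, following the strategy of Illusie's proof of the Lefschetz trace formula for coherent cohomology in SGA~5, Exp.~III.

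First, since $X$ and $\Gam$ are proper and smooth over $S$ and $\cK\in\Dperf(\cO_X)$, the complex $\RGa(X,\cK)$ lies in $\Dperf(\cO_S)$, so $\Tr(u_*\mid\RGa(X,\cK))$ is a well-defined element of $\Gam(S,\cO_S)$. Moreover, the hypotheses $a_2$ \'etale and $da_1=0$ together force the closed immersion $a$ to be transverse to the diagonal inside $X\times_S X$: on tangent spaces, the image of $T\Gam$ inside $a_1^*TX\oplus a_2^*TX$ has vanishing first projection (by $da_1=0$) and an isomorphic second projection (by $a_2$ \'etale), while $T\Delta_X$ is the diagonal subspace, so the two subspaces are complementary at every point of $\Fix a$. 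Consequently $\Fix a=\Gam\times_{X\times_S X}X$ is \'etale over $S$, and being closed in the proper $S$-scheme $\Gam$ is even finite \'etale over $S$; in particular each component $\bt$ is finite \'etale over $S$ and $\Tr_{\bt/S}$ is defined.

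Second, I would express $\Tr(u_*\mid\RGa(X,\cK))$ as the image in $\Gam(S,\cO_S)$ of a local trace class supported on $\Fix a\subset X\times_S X$. Concretely, using the trace map $\Tr_{a_2,\cK}\cln a_{2*}a_2^*\cK\to\cK$ (available since $a_2$ is finite \'etale) and the adjunction $\cK\to a_{1*}a_1^*\cK$ (available since $a_1$ is proper), the endomorphism $u_*$ of $\RGa(X,\cK)$ is $R\pi_{X*}$ of the composition through $u$. Dualising $\cK$ and invoking the Serre-duality pairing for the smooth proper morphism $\pi_X$, the trace becomes the pushforward to $S$ of a class in $\RGa(X\times_S X,\cdot)$ that is naturally supported on $\Fix a$; base change along the cartesian square defining $\Fix a$ then localises it as a sum of classes indexed by $\bt\in\pi_0(\Fix a)$.

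Third, the central technical step is to identify the local trace class at each component $\bt$ with the stalk trace $\Tr(u_\bt\mid\cK_\bt)\in\Gam(\bt,\cO_\bt)$. Here the hypothesis $da_1=0$ is decisive: in the general Woods Hole setting the local contribution at a non-degenerate fixed point carries a factor $\det(1-da_1)^{-1}$ coming from the Koszul resolution of $\Delta_X\immto X\times_S X$ pulled back along $a$, but $da_1=0$ makes this determinant equal to $\det(\id)=1$, so the pulled-back Koszul complex becomes quasi-isomorphic to $\cO_{\Fix a}$ with no twist. Combined with $a_2$ \'etale (which identifies $i_\bt^*a_2^*\cK$ cleanly with $\cK_\bt\in\Dperf(\cO_\bt)$), this yields $\Tr(u_\bt\mid\cK_\bt)$ as the local factor; applying $\Tr_{\bt/S}$ component by component and summing produces the right-hand side.

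The main obstacle is step three: making the local trace class precise via the pulled-back Koszul resolution of the diagonal, and verifying that $da_1=0$ eliminates the normal-bundle determinant factor. The remaining bookkeeping — base change, compatibility of Serre duality with the trace, and sign conventions — is standard but must be tracked carefully.
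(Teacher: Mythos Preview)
Your proposal is correct and follows essentially the same route as the paper: both invoke the Lefschetz--Verdier trace formula for coherent cohomology from SGA~5, Exp.~III (the paper cites Thm.~6.10 and Rem.~6.11 directly), which already localises the global trace to $\Fix a$, and then both identify the local term at each component $\bt$ using the hypothesis $da_1=0$.

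The only difference is presentational. The paper works with the explicit residue-symbol form of the local terms coming out of SGA~5, chooses \'etale coordinates $x_1,\dots,x_m$ near $\bt$, writes $a_2(x_i)=\sum_j c_{ij}(a_2(x_j)-a_1(x_j))$, and uses $da_1=0$ together with $a_2$ \'etale to force $c_{ij}\equiv\delta_{ij}$ on $\bt$, so that $\det(c_{ij})|_\bt=1$ and the residue collapses to $\Tr_{\bt/S}(\Tr(u_\bt\mid\cK_\bt))$ via Hartshorne's properties (R3) and (R6). Your formulation in terms of the Woods Hole factor $\det(1-da_1)^{-1}=1$ is the conceptual rephrasing of the same computation: the matrix $(c_{ij})$ is precisely the matrix whose determinant appears in the residue manipulation, and it equals the identity for the same reason your determinant does. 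Either packaging is fine; the paper's explicit residue calculation has the advantage of being fully self-contained once the SGA~5 formula is granted, whereas your Koszul description would require you to spell out the identification of the pulled-back Koszul complex with $\cO_{\Fix a}$.
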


\begin{rmk}
\begin{itemize}
\item
	Since $X$ and $\Gam$ are smooth over $S$ and $da_1 = 0$,
	$\Gam$ meets transversally to the diagonal of $X$, that is, $\Fix a$ is \'etale over $S$
	\cite[IV Cor.\ 17.13.6]{EGA}.
	Hence $\Fix a$ is a finite direct sum of the spectrums of local artinian rings which are \'etale over $S$.
\item
	Assume that $S= \Spec\Wnk$ and $X$ is defined over $\WnFq$.
	We suppose that there exists a lift $\Fr_X\cln X\to X$ of the relative Frobenius endomorphism on $X\otimes_\Wnk k$.
	Then $d\Fr_X ^n = 0$.
	Thus the condition $da_1 = 0$ holds if we twist the given correspondence by the $n$-th power of the Frobenius lift $\Fr_X$.
\end{itemize}
\end{rmk}

Before starting the proof, we recall a notation of residue symbols.
We consider the following commutative diagram
\[
	\xymatrix{
		\bt \ar@{^{(}-{>}}[r]^i \ar[dr]_g & Z \ar[d]^f \\
		& S, \\
	}
\]
where $i$ is a closed immersion, $f$ is smooth of relative dimension $d$, and $g$ is \'etale and finite.
% we can define a section of $g^!\cO_S$ as a homomorphism sending 
% $\overline{s_1}\wedge\cdots\wedge \overline{s_d} \in \Gam (\bt,\bigwedge^d \cI_\bt/\cI_\bt^2)$ to $\omega$.
% This induces a homomorphism $\cO_\bt\to g^! \cO_S$, and $g_*\cO_\bt\to\cO_S$ by adjunction.
% We write
% \[
% 	\Res\left(
% 		\begin{matrix}
% 			\omega \\ s_1\quad\cdots\quad s_d
% 		\end{matrix}
% 		\right)
% 	\in \Gam(S,\cO_S)
% \]
% for the image of $1_{\Gam (\bt,\cO_\bt)}$ by this homomorphism.

Since $g$ is \'etale, the natural morphism
\[
	\cI_\bt/\cI_\bt^2 \to \Omega_{Z/S}^1\ts_{\cO_Z}\cO_Z/\cI_\bt
\]
is isomorphic.
Putting $\bigwedge^d$ to the both sides of the above isomorphism, we have
\[
	\bigwedge^d \cI_\bt/\cI_\bt^2 \isomto \Omega_{Z/S}^d\ts_{\cO_Z}\cO_Z/\cI_\bt.
\]
We note that $\bigwedge^d \cI_\bt/\cI_\bt^2$ is an invertible sheaf on $\bt$.

Now we take an $\cO_Z$-sequence $s_1,\dots,s_d$ generating the defining ideal $\cI_\bt$ of $\bt$
and a global section $\omega$ of $\Omega_{Z/S}^d$.
We denote by $\overline{\omega}$ the global section of $\Omega_{Z/S}^d\ts_{\cO_Z}\cO_Z/\cI_\bt$ obtained from $\omega$
and by $\overline{s}_1, \ldots, \overline{s}_d$ the global sections of $\cI_\bt/\cI_\bt^2$ obtained from $s_1, \ldots, s_d$.
By the above isomorphism, we can regard $\overline{\omega}$ as a global section of $\bigwedge^d \cI_\bt/\cI_\bt^2$.
For any $\tau\in\Gam(\bt,\cO_\bt)$, we define
\begin{align*}
	\Res_{Z/S}\left( \tau \quad
		\begin{matrix}
			\omega \\ s_1\quad\cdots\quad s_d
		\end{matrix}
		\right)
	&:=
	\Tr_{\bt/S} \left( \tau \cdot \overline{\omega}\ts (d\overline{s}_1 \wedge\cdots\wedge d\overline{s}_d)^{-1} \right) ,
\end{align*}
which is an element of $\Gam(S,\cO_S)$.
% Giving a global section $\Gam$ of $g^!\cO_S$, we can define a homomorphism $\cO_\bt\to g^! \cO_S$ sending $1$ to $\Gam$.
% This induces a homomorphism $g_*\cO_\bt\to\cO_S$ by adjunction.
% $\Res(\Gam) \in \Gam (S,\cO_S)$ is defined as the image of $1_{\Gam (\bt,\cO_\bt)}$ by this homomorphism.
% If $\Gam$ is given by a homomorphism sending 
% $\overline{s_1}\wedge\cdots\wedge \overline{s_d} \in \Gam (\bt,\wedge^d \cI_\bt/\cI_\bt^2)$ to $\omega\in\Gam (Z,\wedge^d \Omega_{Z/S})$,
% where $s_1,\cdots,s_d$ is an $\cO_Z$-sequence generating the defining ideal $\cI_\bt$ of $\bt$,
% we write 
% $$
% 	\Res\left(
% 		\begin{matrix}
% 			\omega \\ s_1\quad\cdots\quad s_d
% 		\end{matrix}
% 		\right)
% 	:= \Res(\Gam).
% $$

\begin{proof}[Proof of Theorem \ref{thm31}]
We put $m = \dim X$.
We denote by $d_1$ (resp. $d_2$) the composite of $d:\cO_{X\times_S X}\to\Omega_{X\times_S X/S}$ and
the first (resp. second) projection of $\Omega_{X\times_S X/S} \isomto \pr_2^*\Omega_{X/S} \oplus \pr_1^*\Omega_{X/S}$.

By the corollary of Lefschetz-Verdier trace formula \cite[Exp.\ III, Thm.\ 6.10 and Rem.\ 6.11]{SGA5}, we obtain
\begin{align*}
	& \Tr(u_* \mid \RGa(X,\cK)) \\
	= & \sum_{\bt\in\pi_0(\Fix a)}\Res_{X\times_S X /S}\left( \Tr(u_\bt \mid \cK_\bt) \quad
		\begin{matrix}
			d_1 s_1 \wedge \cdots \wedge d_1 s_m \wedge d_2 t_1 \wedge \cdots \wedge d_2 t_m \\
			s_1 \quad \cdots \quad s_m \quad t_1 \quad \cdots \quad t_m \\
		\end{matrix}
		\right),
\end{align*}
where $s_1,\dots,s_m$ is an $\cO_{X\times_S X}$-sequence 
generating the defining ideal of $\Gam$ nearby $\bt$, and
$t_1,\dots,t_m$ is an $\cO_{X\times_S X}$-sequence defined as follows.

Since $a_2 \circ i_\bt$ is a closed immersion and $\bt$ is \'etale over $S$,
there are an open neighborhood $X'$ of $\bt$ in $X$ and an \'etale morphism $X'\to S[T_1,\dots,T_m]$
such that the following diagram is commutative:
\[
	\xymatrix{
		\bt \ar@{^{(}-{>}}[r] \ar[d] & X' \ar[d] \\
		S \ar@{^{(}-{>}}[r] & S[T_1,\dots,T_m] ,\\
	}
\]
where the bottom row is the zero section.
We denote by $x_1,\dots,x_m$ the sections of $\cO_X'$ corresponding to $T_1,\dots,T_m$.
By the above diagram, we obtain $a_2(x_i) \in \cI_\bt$ for each $i$.
In addition, $1\otimes x_i - x_i\otimes 1 , \ldots, 1\otimes x_m - x_m\otimes 1$ generate
the defining ideal of diagonal $\Delta_X\cln X\immto X\times_S X$ nearby $\bt$ since $X'$ is \'etale over $S[T_1,\dots,T_m]$.
We put $t_i = 1\otimes x_i - x_i\otimes 1$.

Then we obtain
\begin{align*}	
	  &	\Res\left( \Tr(u_\bt \mid \cK_\bt) \quad
		\begin{matrix}
			d_1 s_1 \wedge \cdots \wedge d_1 s_m \wedge d_2 t_1 \wedge \cdots \wedge d_2 t_m \\
			s_1 \quad \cdots \quad s_m \quad t_1 \quad \cdots \quad t_m \\
		\end{matrix}
		\right) \\
	= &	\Res\left( \Tr (u_\bt \mid \cK_\bt) \quad
		\begin{matrix}
			d_1 s_1 \wedge \cdots \wedge d_1 s_m \wedge d(1\otimes x_1) \wedge \cdots \wedge d(1\otimes x_m) \\
			s_1 \quad \cdots \quad s_m \quad t_1 \quad \cdots \quad t_m \\
		\end{matrix}
		\right) \\
	= & \Res\left( \Tr (u_\bt \mid \cK_\bt) \quad
		\begin{matrix}
			d s_1 \wedge \cdots \wedge d s_m \wedge d(1\otimes x_1) \wedge \cdots \wedge d(1\otimes x_m) \\
			s_1 \quad \cdots \quad s_m \quad t_1 \quad \cdots \quad t_m \\
		\end{matrix}
		\right) \\
	= & \Res\left( \Tr (u_\bt \mid \cK_\bt) \quad
		\begin{matrix}
			d(a_2(x_1)) \wedge \cdots \wedge d(a_2(x_m)) \\
			a_2(x_1)-a_1(x_1) \quad \cdots \quad a_2(x_m)-a_1(x_m)
		\end{matrix}
		\right) ,
\end{align*}
where we write $\Res$ instead of $\Res_{X\times_S X /S}$ for simplicity.
At the last equality we use the property of \cite[III.9\ (R3)]{Ha}.

By the definition of $x_1, \dots, x_m$, for each $i$ we can write 
\[
	a_2(x_i) = \sum_{j=1}^{m}c_{ij}(a_2(x_j)-a_1(x_j))
\]
nearby $\bt$, where each $c_{ij}$ is a section of $\cO_\Gam$.
Since $a_1 \circ i_\bt = a_2 \circ i_\bt$ and $da_1 = 0$ by the assumption, we have
\[
	da_2(x_i) = \sum_{j=1}^{m}c_{ij}da_2(x_j)
\]
as sections of $i_\bt^*\Omega_{\Gam/S}$.
Thus we obtain $c_{ij} = \delta_{ij}$ in $\cO_\bt$, where $\delta_{ij}$ is Kronecker's delta.
Therefore, by \cite[III.9\ (R6)]{Ha}, we have
\begin{align*}
	  & \Res\left( \Tr (u_\bt \mid \cK_\bt) \quad
		\begin{matrix}
			d(a_2(x_1)) \wedge \cdots \wedge d(a_2(x_m)) \\
			a_2(x_1)-a_1(x_1) \quad \cdots \quad a_2(x_m)-a_1(x_m)
		\end{matrix}
		\right) \\
	= & \Res\left( \Tr (u_\bt \mid \cK_\bt) \quad
		\begin{matrix}
			\det(c_{ij}) \cdot d(a_2(x_1)-a_1(x_1)) \wedge \cdots \wedge d(a_2(x_m)-a_1(x_m)) \\
			a_2(x_1)-a_1(x_1) \quad \cdots \quad a_2(x_m)-a_1(x_m)
		\end{matrix}
		\right) \\
	= & \Tr_{\bt/S}(\Tr(u_\bt \mid \cK_\bt) \det(c_{ij})|_{\bt}) \\
	= & \Tr_{\bt/S}(\Tr(u_\bt \mid \cK_\bt)).
\end{align*}

\end{proof}

\end{section}

\begin{section}{Some linear algebra}\label{sect5}
In this section, we recall some linear algebra for Section \ref{sect6} and \ref{sect7}.
We write $\sg_0\cln \WnFq\to\WnFq$ (resp. $\sg\cln\Wnk\to\Wnk$)
for an endomorphism induced by the $p$-th power map on $\Fq$ (resp. on $k$).
We put $d = [\Fq:\Fp]$.

\begin{lem}\label{lem311}
Let $M_0$ be a $\WnFq$-module of finite type
and $\vP_0\cln M_0 \to M_0$ a $\sg_0$-linear endomorphism.
We put $F_0 = \vP_0^d$ and $M=M_0\ts_\WnFq\Wnk$.
We write $\vP\cln M \to M$ (resp. $F\cln M \to M$)
for the $\sg$-linear (resp. $\Wnk$-linear) extension of $\vP_0$ (resp. $F_0$) to $M$.
Then the following assertions hold:
\begin{itemize}
\item[\rm{(1)}]
	The morphism $1-\vP\cln M \to M$ is surjective.
\item[\rm{(2)}]
	We have $M^{1-\vP}/p\left(M^{1-\vP} \right) = \left( M/pM \right)^{1-\vP}$.
\item[\rm{(3)}]
	The canonical morphism $M^{1-\vP}\ts_\Zoverpn\Wnk \to M$ is injective.
	Moreover, $\vP$ and $F$ is nilpotent on the cokernel of this morphism.
\end{itemize}
\end{lem}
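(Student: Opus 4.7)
The plan is to establish (1), (2), and (3) together by induction on $n$, using the $p$-adic filtration on $M$ and the base-case facts about $\sg$-semilinear modules over the algebraically closed field $k$.

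For (1), the base case $n=1$ is the standard fact that $1-\vP$ is surjective on any finite-dimensional $\sg$-semilinear $k$-vector space: because $k$ is algebraically closed, this follows from Lang's theorem applied to $\mathbb{G}_a^r$, or by direct inspection of the polynomial equations cutting out the image. For $n>1$, I would apply the snake lemma to the short exact sequence
\[
0 \to pM \to M \to M/pM \to 0
\]
with vertical maps $1-\vP$. Since $pM$ is annihilated by $p^{n-1}$, the inductive hypothesis gives $1-\vP$ surjective on $pM$; the base case gives it on $M/pM$; hence $1-\vP$ is surjective on $M$. For (2), the same snake-lemma diagram, now with vanishing cokernels by (1), yields the short exact sequence
\[
0 \to (pM)^{1-\vP} \to M^{1-\vP} \to (M/pM)^{1-\vP} \to 0,
\]
so it suffices to identify $(pM)^{1-\vP}$ with $pM^{1-\vP}$. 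The inclusion $pM^{1-\vP} \subseteq (pM)^{1-\vP}$ is immediate. Conversely, given $x = py \in M^{1-\vP}$, one has $p(\vP(y)-y)=0$, so $\vP(y)-y$ lies in the $\vP$-stable $k$-subspace $M[p]$; applying the base case of (1) to $M[p]$ produces $w \in M[p]$ with $(1-\vP)w = \vP(y)-y$, and then $y+w \in M^{1-\vP}$ maps to $x$ under multiplication by $p$.

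For (3), let $C$ be the cokernel of $f \cln M^{1-\vP} \otimes_{\Zoverpn} \Wnk \to M$. Since $F = \vP^d$, nilpotence of $\vP$ on $C$ implies nilpotence of $F$. Reducing modulo $p$, $C/pC$ is the cokernel of $(M/pM)^{1-\vP} \otimes_\Fp k \to M/pM$, and the classical decomposition of finite-dimensional $\sg$-semilinear $k$-modules into étale and nilpotent parts (the $n=1$ case of (3)) shows $\vP$ is nilpotent on this cokernel; combined with $p^n C = 0$, $\vP$ is nilpotent on $C$. For injectivity, I would first apply the Fitting decomposition for the $\Wnk$-linear endomorphism $F$ to write $M = M_s \oplus M_n$ with $F$ bijective on $M_s$ and nilpotent on $M_n$. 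Since $\vP$ is then nilpotent on $M_n$, $1-\vP$ is bijective there and $M^{1-\vP} \cap M_n = 0$, so $M^{1-\vP} \subseteq M_s$ and we may assume $F$ (hence $\vP$) is bijective on $M$. In this case I would prove $f$ is an isomorphism by induction on $n$. Using the $\Zoverpn$-flatness of $\Wnk$ (a Teichm\"uller basis makes $\Wnk$ a free $\Zoverpn$-module), one obtains a commutative diagram with exact rows
\[
\xymatrix{
0 \ar[r] & pM^{1-\vP} \otimes \Wnk \ar[r] \ar[d]^{\cong} & M^{1-\vP} \otimes \Wnk \ar[r] \ar[d]^f & (M/pM)^{1-\vP} \otimes_\Fp k \ar[r] \ar[d]^{\cong} & 0 \\
0 \ar[r] & pM \ar[r] & M \ar[r] & M/pM \ar[r] & 0,
}
\]
where the left vertical is an isomorphism by the inductive hypothesis applied to $pM$ as a $W_{n-1}(k)$-module (using (2) to identify the domain with $(pM)^{1-\vP} \otimes_{\mathbb{Z}/p^{n-1}} W_{n-1}(k)$), and the right vertical is an isomorphism by the base case. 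The short five-lemma then gives that $f$ is an isomorphism, hence in particular injective.

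The main obstacle is the injectivity in (3). The inductive step relies on three nontrivial inputs: the $\Zoverpn$-flatness of $\Wnk$ (ensuring exactness of the top row and bypassing potential $\mathrm{Tor}_1^{\Zoverpn}$ obstructions), the identification $(pM)^{1-\vP} = pM^{1-\vP}$ from (2) (needed to recognize the leftmost term of the diagram), and the Fitting decomposition for the linear operator $F$ (which reduces to the bijective case, where the cokernel is forced to vanish and the five-lemma becomes available). The $n=1$ base case of (3) itself draws on the standard theory of $\sg$-semilinear modules over the algebraically closed field $k$, whose étale part is $N \otimes_\Fp k$.
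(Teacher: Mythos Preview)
Parts (1) and (2) are correct and match the paper (the paper phrases the reverse inclusion $(pM)^{1-\vP}\subseteq p\,M^{1-\vP}$ via the snake lemma applied to $0\to M[p]\to M\xrightarrow{\times p} pM\to 0$, which is your element-wise argument in diagrammatic form).

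In (3) there is a genuine error: the identity $F=\vP^d$ is false on $M$, and you invoke it twice. Since $\vP$ is $\sg$-semilinear, $\vP^d$ is $\sg^d$-semilinear, whereas $F$ is by definition the $\Wnk$-\emph{linear} extension of $F_0=\vP_0^d$; as $k$ is an algebraic closure of $\Fq$, one has $\sg^d\neq\id$ on $\Wnk$, so $\vP^d\neq F$. Explicitly, $\vP^d(x_0\otimes\alpha)=\vP_0^d(x_0)\otimes\sg^d(\alpha)$ while $F(x_0\otimes\alpha)=\vP_0^d(x_0)\otimes\alpha$. Thus neither ``nilpotence of $\vP$ on $C$ implies nilpotence of $F$'' nor ``$F$ nilpotent on $M_n$ implies $\vP$ nilpotent on $M_n$'' follows from $F=\vP^d$. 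Both conclusions are nonetheless true and easy to repair. For the first, note $\vP^N(x_0\otimes 1)=\vP_0^N(x_0)\otimes 1$, so $\vP^{dN}=0$ on $C$ forces $\vP_0^{dN}(x_0)\otimes 1=0$ in $C$ for every $x_0\in M_0$, whence $F^N(x_0\otimes\alpha)=\alpha\cdot(\vP_0^{dN}(x_0)\otimes 1)=0$; this is essentially the paper's argument, which treats the nilpotence of $F$ as a separate step precisely because $F\neq\vP^d$. For the second, perform the Fitting decomposition already on $M_0$ for the $\WnFq$-linear map $F_0$ (where $F_0=\vP_0^d$ genuinely holds, so $\vP_0$ is nilpotent on $(M_0)_n$) and then base-change.

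For the injectivity in (3), your route via Fitting---reducing to the case where $\vP$ is bijective so the outer verticals become isomorphisms and the short five lemma applies---works once the above is fixed, but it is more elaborate than the paper's. The paper runs the same inductive diagram without any such reduction: the snake lemma gives an exact sequence $0\to\ker(\text{left})\to\ker(\text{middle})\to\ker(\text{right})$, so injectivity of the outer verticals (the inductive hypothesis on $pM$ and the $n=1$ case on $M/pM$) already forces injectivity of $M^{1-\vP}\otimes_{\Zoverpn}\Wnk\to M$. No Fitting decomposition is needed, and the outer maps need only be injective, not isomorphisms.
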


\begin{proof}
(1)
If $n=1$, this assertion follows from \cite[Exp.\ XXII, Prop.\ 1.2]{SGA7}.

We assume that the assertion holds for $n-1$.
We have the commutative diagram of the exact sequences
\[
\xymatrix
{
	0 \ar[r] & pM \ar[r] \ar[d]^{1-\vP} & M \ar[r] \ar[d]^{1-\vP} & M/pM \ar[r] \ar[d]^{1-\vP} & 0 \\
	0 \ar[r] & pM \ar[r] & M \ar[r] & M/pM \ar[r] & 0.
}
\]
We note that $pM$ (resp. $M/pM$) has a natural $W_{n-1}(k)$-module (resp. $k$-vector space) structure of finite type.
By the assumption of the induction (resp. the case $n=1$), the endomorphism $1-\vP$ on $pM$ (resp. $M/pM$) is surjective.
Hence, by the above exact sequence, $1-\vP$ on $M$ is surjective .

(2)
Applying the snake lemma to the above diagram, we have the exact sequence
\[
	0 \to (pM)^{1-\vP} \to M^{1-\vP} \to (M/pM)^{1-\vP} \to 0.
\]
Hence we have $M^{1-\vP}/(pM)^{1-\vP} = (M/pM)^{1-\vP}$.

On the other hand, we have the commutative diagram of the exact sequences
\[
\xymatrix{
	0 \ar[r] & M[p] \ar[r] \ar[d]^{1-\vP} & M \ar[r]^{\times p} \ar[d]^{1-\vP} & pM \ar[r] \ar[d]^{1-\vP} & 0 \\
	0 \ar[r] & M[p] \ar[r] & M \ar[r]^{\times p} & pM \ar[r] & 0,
}
\]
where we put $M[p] = \Ker(M\xrightarrow{\times p}M)$.
We note that $M[p]$ has a natural $k$-vector space structure of finite type.
As we have shown above, the endomorphism $1-\vP$ on $M[p]$ is surjective.
Applying the snake lemma to this diagram, we have $p(M^{1-\vP}) = (pM)^{1-\vP}$.

Therefore we obtain $M^{1-\vP}/p(M^{1-\vP}) = (M/pM)^{1-\vP}$.

(3)
If $n=1$, the assertion follows from \cite[Exp.\ XXII, Cor.\ 1.1.10]{SGA7}.
We assume that the assertion holds for $n-1$.

By the proved assertion (2), we have the exact sequence
\[
\xymatrix{
	0 \ar[r] & (pM)^{1-\vP} \ar[r] & M^{1-\vP} \ar[r] & (M/pM)^{1-\vP} \ar[r] & 0.
}
\]
Tensoring this diagram with $\Wnk$, we obtain the exact sequence
\[
\xymatrixcolsep{1pc}\xymatrix{
	0 \ar[r] & (pM)^{1-\vP}\ts W_{n-1}(k) \ar[r] & M^{1-\vP}\ts\Wnk \ar[r] & (M/pM)^{1-\vP}\ts k \ar[r] & 0,
}
\]
where we write
\[
	\ts W_{n-1}(k),\quad \ts\Wnk \quad\text{and}\quad \ts k
\]
for
\[
	\ts_{\Z/p^{n-1}}W_{n-1}(k),\quad \ts_{\Z/p^n}\Wnk \quad\text{and}\quad \ts_\Fp k.
\]
We have the commutative diagram of the exact sequences
\[
\xymatrixcolsep{1pc}\xymatrix{
	0 \ar[r] & (pM)^{1-\vP}\ts W_{n-1}(k) \ar[r] \ar[d] & M^{1-\vP}\ts\Wnk \ar[r] \ar[d] & (M/pM)^{1-\vP}\ts k \ar[r] \ar[d] & 0 \\
	0 \ar[r] & pM \ar[r] & M \ar[r] & M/pM \ar[r] & 0 \lefteqn{.}
}
\]
By the assumption of the induction (resp. the case $n=1$), 
the left (resp. right) vertical arrow of the above diagram is injective.
Hence the morphism
\[
	M^{1-\vP}\ts\Wnk \to M
\]
is injective.

We prove the latter statement.
We put
\[
	M' = \Coker(M^{1-\vP}\ts\Wnk \to M).
\]
Applying the snake lemma to the above diagram, we have
\[
	M'/pM' \simeq \Coker\left( (M/pM)^{1-\vP}\ts k \to M/pM \right).
\]
We need the following lemma:
\begin{lem}\label{lem312}
Let $M'$ be a $\Wnk$-module
and $\vP\cln M' \to M'$ be a $\Z/p^n$-linear endomorphism.
We assume that $\vP$ is nilpotent on $M'/pM'$.
Then $\vP$ is nilpotent on $M'$.
\end{lem}
\begin{proof}
For the assumption, there exists an integer $e \geq 1$ such that $\vP^e = 0$ on $M'/pM'$.
Let $x_0$ be an element of $M'$.
For each integer $i\geq 0$, we can inductively find $x_{i+1}\in M'$ such that $\vP^e(x_i) = px_{i+1}$.
Thus we have
$\vP^{ne}(x)=p^nx_n=0.$
Therefore $\vP$ is nilpotent.
\end{proof}
We return to the proof of Lemma \ref{lem311}.(3).
By the case $n=1$, the endomorphism $\vP$ on $M'/pM'$ is nilpotent.
Therefore, by Lemma \ref{lem312}, $\vP$ is also nilpotent on $M'$.

We will show that $F$ is nilpotent on $M'$.
We fix an integer $e$ such that $\vP^{de} = 0$ on $M'/pM'$.
Let $x_0$ be an element of $M_0$ and $\al$ an element of $\Wnk$.
We denote by $x_0\ts\al$ the class of $x_0\ts\al\in M$ in $M'$.
By Lemma \ref{lem312}, it suffices to show that the element
\[
	F^e(x_0\ts\al) = \al(\vP_0^{de}(x_0)\ts1)
\]
belongs to $pM'$.
By the definition of $e$, the element 
\[
	\vP^{de}(x_0\ts\al) = \sg^{de}(\al)(\vP_0^{de}(x_0)\ts 1)
\]
belongs to $pM'$.
Hence we have either $\sg^{de}(\al) \in p\Wnk$ or $\vP_0^{de}(x_0)\ts 1 \in p\Wnk$.
For the former case, we have $\al\in p\Wnk$.
Therefore, in both cases, we have $F^e(x_o\ts\al) \in pM'$.
\end{proof}

\begin{lem}\label{lem313}
We use the notation of Lemma \ref{lem311}.
We assume that $M$ is free over $\Wnk$ and $M^{1-\vP}$ is free over $\Z/p^n$.
Then there exists an integer $N \geq 1$ such that the following assertion holds:
Let $\vp$ be a $\Wnk$-linear endomorphism on $M$ 
which is defined over $\WnFq$ and satisfies $\vp\circ\vP = \vP\circ\vp$.
For any $m \geq N$, we have
\[
	\Tr(\vp\circ F^m \mid M^{1-\vP}) = \Tr(\vp\circ F^m \mid M).
\]
If $n=1$, then $N=1$ satisfies the above condition.
\end{lem}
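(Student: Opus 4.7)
The plan is to exploit the short exact sequence
\[
0 \to M^{1-\vP}\ts_{\Z/p^n}\Wnk \to M \to M' \to 0
\]
provided by Lemma \ref{lem311}(3), on whose cokernel $F$ is nilpotent, and then compute the trace on $M$ via this filtration. Set $\tilde M := M^{1-\vP}\ts_{\Z/p^n}\Wnk$. First I would record the needed compatibilities. From $F_0 = \vP_0^d$, the map $F$ commutes with $\vP$, so $F$ stabilises $M^{1-\vP}$; the hypothesis $\vp\vP = \vP\vp$ makes $\vp$ stabilise $M^{1-\vP}$ as well; and restricting $\vp\vP = \vP\vp$ to $M_0\ts 1 \subset M$ gives $\vp_0\vP_0 = \vP_0\vp_0$, hence $\vp_0 F_0 = F_0\vp_0$ and so $\vp F = F\vp$ on $M$. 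In particular $\vp\circ F^m$ preserves $\tilde M$.

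The key step is to show that the displayed sequence splits as a sequence of $\Wnk$-modules. Since $\tilde M$ and $M$ are finite free over the local Artinian principal ideal ring $\Wnk$, the splitting is equivalent to $M'$ being free, which by the local criterion of flatness amounts to injectivity of $\tilde M/p\tilde M \to M/pM$. By Lemma \ref{lem311}(2) the source identifies with $(M/pM)^{1-\vP}\ts_\Fp k$, and the $n=1$ case of Lemma \ref{lem311}(3) applied to $M/pM$ provides the required injection into $M/pM$. This is the main technical input and the principal obstacle. Fix any $\Wnk$-linear splitting $M = \tilde M \oplus M''$ (not required to be $\vp$- or $F$-equivariant). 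Because $\vp\circ F^m$ preserves $\tilde M$, the block-triangular form of its matrix immediately gives
\[
\Tr(\vp\circ F^m \mid M) = \Tr(\vp\circ F^m \mid \tilde M) + \Tr(\overline{\vp\circ F^m} \mid M').
\]
The first summand equals $\Tr(\vp\circ F^m \mid M^{1-\vP})$, viewed in $\Wnk$ via $\Z/p^n \immto \Wnk$, because $\vp\circ F^m|_{\tilde M}$ is the $\Wnk$-linear extension of $\vp\circ F^m|_{M^{1-\vP}}$ (a $\Z/p^n$-basis of $M^{1-\vP}$ becomes a $\Wnk$-basis of $\tilde M$ with the same matrix).

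It remains to kill the contribution from $M'$. Let $r$ be the nilpotency index of $F$ on $M'$. When $n=1$, commutation of $\vp$ and $F$ gives $(\vp F^m)^r = \vp^r F^{mr} = 0$ on $M'$ for every $m \geq 1$, and any nilpotent endomorphism of a finite-dimensional $k$-vector space has trace $0$; hence $N = 1$ works. For general $n$ this cheap argument fails because a nilpotent endomorphism of a $\Wnk$-module can have nonzero trace (for instance, multiplication by $p$ on $\Z/p^n$ with $n\geq 2$ is nilpotent of trace $p \neq 0$); instead I would take $N := r$, so that $F^m = 0$ on $M'$ for $m \geq N$, making $\vp\circ F^m$ itself vanish on $M'$ and its trace zero.
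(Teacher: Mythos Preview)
Your argument is correct and follows the same outline as the paper: use the short exact sequence $0\to \tilde M\to M\to M'\to 0$ from Lemma~\ref{lem311}(3), split it, and kill the $M'$-contribution via the nilpotency of $F$ (for general $n$) or of $\vp F^m$ (for $n=1$). The only difference is in the splitting step: the paper notes that $\Wnk$ is Gorenstein, so the finite free module $\tilde M$ is injective and the sequence splits immediately, whereas you instead show $M'$ is free via the local flatness criterion together with Lemma~\ref{lem311}(2) and the $n=1$ case of Lemma~\ref{lem311}(3)---a slightly longer but equally valid route.
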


\begin{rmk}\label{rmk314}
By the assumption of $\vp$,
we have $\vp\circ F = F\circ\vp$ and
$M^{1-\vP}$ is $\vp\circ F^m$-stable for any $m$.
\end{rmk}

\begin{proof}[Proof of Lemma \ref{lem313}]
By Lemma \ref{lem311}(3), we have the exact sequence
\[
	0 \to M^{1-\vP}\ts_{\Z/p^n}\Wnk \to M \to M' \to 0 ,
\]
where we put $M' = \Coker(M^{1-\vP}\ts_{\Z/p^n}\Wnk \to M)$.
By the assumption, $M^{1-\vP}\ts\Wnk$ is free $\Wnk$-module of finite rank.
Since $\Wnk$ is a Gorenstein ring, $M^{1-\vP}\ts\Wnk$ is an injective $\Wnk$-module.
Hence the above sequence splits.
Thus we have
\[
	\Tr(\vp\circ F^m \mid M) = \Tr(\vp\circ F^m \mid M^{1-\vP}) + \Tr(\vp\circ F^m \mid M').
\]

Let $N$ be the minimal number of the integers $N'$ such that the equation $\vP^{dN'}=0$ on $M'$ holds.
By Lemma \ref{lem311}(3), this value is well-defined and we have $F^m = 0$ on $M'$ for any $m \geq N'$.
Therefore we have $\Tr(\vp\circ F^m \mid M') = 0$ and the equation in Lemma \ref{lem313} holds.

We assume that $n=1$.
By Remark \ref{rmk314}, $\vp\circ F$ is nilpotent on $M'$.
Hence all of the eigenvalues of $\vp\circ F$ are zero.
Therefore, for any $m\geq 1$, we have $\Tr(\vp\circ F^m \mid M') = 0$.
\end{proof}
\end{section}

\begin{section}{Main theorem for the case $n=1$}\label{sect6}

In this section we prove Theorem \ref{thm12} under the condition (ii).
%It contains the assumptions that sheaves are smooth and $p$-torsion, and schemes are smooth over $k$.

\begin{thm}\label{thm41}
Let $\Xob$ and $\Gamob$ be proper smooth $\Fq$-schemes
and $X_0$ an open $\Fq$-subscheme of $\Xob$ such that the complement $\Xb \smin X$ is associated to a Cartier divisor.
Let $\ab_0 \cln \Gamob \to \Xob \times _\Fq \Xob$ be an $\Fq$-morphism.
We write $a_0 \cln \Gam_0 \to X_0\times_k X_0$ for the morphism such that the following diagram is cartesian:
\[
\xymatrix{
	\Gam_0 \ar[r]^-{a_0} \ar@{^{(}-{>}}[d] \ar@{}[rd]|-{\square} & X_0\times_\Fq X_0 \ar@{^{(}-{>}}[d] \\
	\Gamob \ar[r]^-{\ab_0} & \Xob \times_\Fq \Xob .\\
}
\]
Suppose that $a_1$ is proper, $\ab_2$ is \'etale, and $\ab$ is a closed immersion.
Then, for any integer $m \geq 1$, any smooth constructible $\Fp$-sheaf $\Gob$ on $X_0$, and any $u_0 \in \Hom(a_{01}^*G_0, a_{02}^*G_0)$, we obtain
\begin{equation*}
	\Tr ((u\circ \Fr^m)_! \mid \RGac (X, G)) = \sum_{z \in \Fix a^{(m)}} \Tr((u\circ \Fr^m)_z \mid G_z),
\end{equation*}
where we put $G_0 = \Gob|X_0$.
\end{thm}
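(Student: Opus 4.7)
The strategy is to transfer the problem to coherent cohomology on the smooth proper compactification $\Xb$, apply the Woods Hole formula (Theorem \ref{thm31}), and return to the \'etale statement via the linear-algebra dictionary of Section \ref{sect5}.

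First, I would compactify and set up a coherent avatar. Using the push-forward of correspondences from Section 2.2.2, one has $\RGac(X,G) = \RGa(\Xb, j_!G)$ and $(j_!u)_* = u_!$, so the left-hand side of Theorem \ref{thm41} becomes a trace on $\RGa(\Xb, j_!G)$ for the extended correspondence $\ab^{(m)}$. Since $\Gb$ is smooth, the coherent sheaf $\cG := \Gb \otimes_\Fp \cO_\Xb$ is locally free and carries a Frobenius structure $\Phi\cln \Fr_\Xb^*\cG \isomto \cG$ with $\Gb = \cG^{\Phi=1}$. Because $\Fr_\Xb^*\cO_\Xb(-D) = \cO_\Xb(-qD) \subseteq \cO_\Xb(-D)$, the semilinear endomorphism $\Phi$ restricts to $\cG(-D) := \cG \otimes_{\cO_\Xb} \cO_\Xb(-D)$, yielding an Artin--Schreier-type exact sequence of \'etale sheaves
\begin{equation*}
0 \to j_!\Gb \to \cG(-D) \xrightarrow{1-\Phi} \cG(-D) \to 0.
\end{equation*}
Taking cohomology and invoking Lemma \ref{lem311}(1) for surjectivity of $1-\Phi$ on each $H^i(\Xb, \cG(-D))$ produces short exact sequences identifying $H^i_c(X,G)$ with the $\Phi$-invariants of $H^i(\Xb, \cG(-D))$.

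Second, I would apply Woods Hole on the coherent side. Tensoring $u_0$ with $\cO_\Xb$ and composing with the unit-root identification $\Phi^m\cln \Fr_\Xb^{m*}\cG \isomto \cG$ produces an $\cO_\Xb$-linear morphism in $\Hom(\ab_1^{(m)*}\cG(-D), \ab_2^*\cG(-D))$, which I also denote $u$. The hypotheses of Theorem \ref{thm31} all hold: $\Xb$ and $\Gamb$ are proper smooth, $\ab$ is a closed immersion, $\ab_2$ is \'etale, and $d\ab_1^{(m)} = 0$ since differentials of Frobenius vanish in characteristic $p$. Thus
\begin{equation*}
\Tr(u_* \mid \RGa(\Xb, \cG(-D))) = \sum_{\bt \in \pi_0(\Fix \ab^{(m)})} \Tr_{\bt/k}(\Tr(u_\bt \mid \cG(-D)_\bt)).
\end{equation*}
To bridge the coherent and \'etale sides, I would apply Lemma \ref{lem313} with $n=1$ to each $M^i = H^i(\Xb, \cG(-D))$, $\vP = \Phi$, and $\vp = u_{0*}$ (which is $k$-linear, defined over $\Fq$, and commutes with $\Phi$ because $u_0$ is induced from an \'etale morphism): the lemma yields $\Tr(\vp \circ F^m \mid (M^i)^{1-\Phi}) = \Tr(\vp \circ F^m \mid M^i)$ for every $m \geq 1$, and summing with signs gives $\Tr((u \circ \Fr^m)_! \mid \RGac(X,G)) = \Tr(u_* \mid \RGa(\Xb, \cG(-D)))$.

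Finally, the local terms are matched. Each fixed point $z \in \Fix a^{(m)}$ lies in $\Gam$, so $w := a_2(z)$ lies in $X$ away from $D$; therefore $\cG(-D)_\bt \simeq \cG_w \simeq G_w \otimes_\Fp k$, and each connected component $\bt$ equals $\Spec k$ so $\Tr_{\bt/k} = \id$. Unpacking the unit-root identification $\Phi^m$ at the fixed point shows that $u_\bt$ acts on $G_w \otimes_\Fp k$ as $u_z \otimes 1$, whose $k$-trace is the image in $k$ of $\Tr(u_z \mid G_z) \in \Fp$; combining this with the Woods Hole identity produces the desired formula. The main obstacle I anticipate is precisely this stalk calculation, which requires tracking the interaction between the unit-root structure on $\cG$ and the \'etale Frobenius correspondence $\Fr^m$ so that the coherent trace from Woods Hole reduces cleanly to the \'etale trace on stalks; a secondary technical point is verifying the exactness of the Artin--Schreier-type sequence for $j_!\Gb$ at the boundary, namely that the equation $b^q - b = a$ has solutions $b \in \cO_\Xb(-D)$ \'etale-locally whenever $a \in \cO_\Xb(-D)$, for which the Cartier-divisor hypothesis on $D$ is essential.
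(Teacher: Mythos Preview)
Your strategy matches the paper's almost exactly: pass to the coherent sheaf $\cG' = \Gb\otimes_{\Fp}\cI$ (your $\cG(-D)$) on $\Xb$, use the Artin--Schreier sequence to identify $H^i_c(X,G)$ with the $\vP$-fixed part of $H^i(\Xb,\cG')$, apply Woods Hole, and invoke Lemma~\ref{lem313} with $n=1$. Two related points are missing, however, and both are exactly where the properness hypothesis on $a_1$ is actually used.

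First, you assert that tensoring $u$ with the structure sheaf yields a morphism in $\Hom(\ab_1^{(m)*}\cG(-D), \ab_2^*\cG(-D))$, but this is not automatic: a priori $\ub\otimes\id$ only gives a map $\ab_1^*\cGb \to \ab_2^*\cGb$, and one must check that the twist by $\cI = \cO_\Xb(-D)$ is respected. The paper obtains $\ab_1^{-1}\cI\cdot\cO_\Gamb \subset \ab_2^{-1}\cI\cdot\cO_\Gamb$ by combining the topological inclusion $\ab_2^{-1}(\Xb\smin X) \subset \ab_1^{-1}(\Xb\smin X)$ (Remark~\ref{rmk42}(a), from properness of $a_1$) with the fact that $\ab_2^{-1}\cI\cdot\cO_\Gamb$ is already radical (since $\ab_2$ is \'etale and $\cI$ is reduced). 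Your inclusion $\Fr_\Xb^*\cO(-D)\subset\cO(-D)$ handles the Frobenius half of the composite but not the $u$ half.

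Second, and more seriously, the Woods Hole sum runs over $\pi_0(\Fix\ab^{(m)})$, not over $\Fix a^{(m)}$, and there may well be fixed points $z\in\Fix\ab^{(m)}$ lying on the boundary $\Gamb\smin\Gam$. Your local-term paragraph treats only the interior fixed points and never argues that a boundary $z$ contributes zero. The paper handles this by observing that the coherent correspondence $u'\circ\Fr$ actually lands in $\ab_2^*(\cI^q\cGb) \subset \ab_2^*(\cI\cG')$ (Frobenius raises $\cI$ to $\cI^q$), so that at a point $z$ with $\ab_2(z)\in\Xb\smin X$ the stalk map $(u'\circ\Fr)_z$ factors through the inclusion $(\ab_2^*\cI\cG')_z \hookrightarrow (\ab_2^*\cG')_z$, which is zero there. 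Without this vanishing, the right-hand side of your Woods Hole identity does not collapse to the sum over $\Fix a^{(m)}$, and the argument is incomplete.
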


\begin{rmk}\label{rmk42}
\begin{itemize}
\item[(a)]
Since $a_{01}$ is proper, the following diagram is cartesian as that of topological spaces:
\[
\xymatrix{
	\Gam \ar[r]^{a_{01}} \ar@{^{(}-{>}}[d]_{j_\Gam} & X \ar@{^{(}-{>}}[d]^{j_X} \\
	\Gamb \ar[r]^{\ab_{01}} & \Xb . \\
}
\]
Thus we have $\ab_{02}^{\,-1}(\Xob \smin X_0) \subset \ab_{01}^{\,-1}(\Xob \smin X_0)$ as topological spaces.
\item[(b)]
Since $\Gob$ is smooth and $X_0$ is dense in $\Xob$, 
there exists a unique morphism $\ub_0\cln\ab_{01}^*\Gob\to\ab_{02}^*\Gob$ such that ${\ub_0}|_{\Gam_0} = u_0$
(see for example \cite[Exp.\ XVI, Prop.\ 3.2]{SGA4}).
\end{itemize}
\end{rmk}

\begin{proof}[Proof of Theorem \ref{thm41}]
By changing the field of definition of $X$, we may assume that $m=1$.

We put $\cGob = \Gob\otimes_{\Fp}\cO_{\Xob}$ and $\cGo = \cGob|_{X_0} = G_0\otimes_{\Fp}\cO_{X_0}$.
We denote by $\cI_0$ the reduced defining ideal of $\Xob \smin X_0$.
We put $\cG_0' = \cI_0\cGob$,
which is a locally free $\cO_{\Xob}$-module since $\cI$ is locally generated by a regular section.
We note that we have $\cG_0' = \Gob \otimes_{\Fp} \cI_0$ since $\Gob$ is a smooth $\Fq$-sheaf.

If $Y$ is a scheme of characteristic $p$, we define the morphism $f_Y$ of schemes by $f_Y = (\id_Y, \vP_{\cO_Y})$,
where $\vP_{\cO_Y}\cln\cO_Y \to \cO_Y$ is the $p$-th power map.
For a $p$-linear morphism of $\cO_Y$-module sheaves
$\vP \cln \cG_1 \to \cG_2$,
we denote by $\vP'\cln f_Y^*\cG_1 \to \cG_2$ the $\cO_Y$-linear homomorphism associated to $\vP$.

We put $\vP_\cGob = \id_\Gob \otimes \vP_{\cO_{\Xob}}$ and we also define $\vP_{\cG_0}$, $\vP_\cGb$, and $\vP_\cG$ as well.
By abuse of notation, we sometimes denote by $\vP$ these $p$-linear homomorphisms of sheaves.
We put $F_0 = \id_{\Gob}\ts \vP'_{\cO_{\Xob}} \cln f_{\Xob}^*\cGob \to \cGob$.

% Then we obtain
% \[
% 	\vP_\cGob(\cG_0') = \vP_\cGob(\cI_0\cGob) \subset (\cI_0)^p \cGob \subset \cI_0\cG_0'.
% \]
%Also we have $\cGob \subset {j_0}_*{j_0}^*\cGob$ since $j_0$ is dense.
Then we have
\[
	F_0(\cI_0^n\cGob) \subset \cI_0^{np}\cGob.
\]
Thus the morphism $F_0^d\cln \Fr_{\Xob}^*\cGob \to \cGob$ is factorized as
\begin{equation}\label{dgm00}
\begin{split}
\xymatrix{
	\Fr_{\Xob}^*\cGo' \ar@{-->}[r] \ar[d] & \cI_0^q\cGob \ar@{^(->}[r] & \cGo'  \ar@{^(->}[d] \\
	\Fr_{\Xob}^*\cGob \ar[rr]^{F_0^d} & & \cGob,
}
\end{split}
\end{equation}
where we put $d = [\Fq:\Fp]$.
Changing the base of the above diagram by $\Fq\to k$, we obtain the diagram
\begin{equation}\label{dgm01}
\begin{split}
\xymatrix{
	\Fr_X^*\cG' \ar[r] \ar[d] & \cI^q\cGb \ar@{^(->}[r] & \cG' \ar@{^(->}[d] \\
	\Fr_X^*\cGb \ar[rr] & & \cGb .
}
\end{split}
\end{equation}
We write $\Fr_{\cGb}$ for the bottom horizontal morphism and $\Fr_{\cG'}$ for the composition of the top horizontal morphisms.
For simplicity, we sometimes write $\Fr$ for these morphisms and their pull-backs.

We have 
\[
	{(\ab_{01}^{\,-1}\cI_0\cdot\cO_{\Gamob})}_\red \subset {(\ab_{02}^{\,-1}\cI_0\cdot\cO_{\Gamob})}_\red
\]
by Remark \ref{rmk42} (a).
Since $\ab_{02}$ is \'etale, we obtain
\[
	\ab_{01}^{\,-1}\cI_0\cdot\cO_{\Gamob} \subset {(\ab_{01}^{\,-1}\cI_0\cdot\cO_{\Gamob})}_\red 
		\subset {(\ab_{02}^{\,-1}\cI_0\cdot\cO_{\Gamob})}_\red = \ab_{02}^{\,-1}\cI_0\cdot\cO_{\Gamob}.
\]
Hence, by the flatness of $\ab_{02}$,
the morphism $\ub_0 \otimes \id\cln \ab_{01}^*\cGb \to \ab_{02}^*\cGb$ is factorized as
\begin{equation}\label{dgm1}
\begin{split}
\xymatrix{
	\ab_{01}^*\cGo' \ar@{-->}[r]^{u_0'} \ar[d] & \ab_{02}^*\cGo' \ar@{^{(}-{>}}[d]\\
	\ab_{01}^*\cGob \ar[r]^{\ub_0 \ts \id} & \ab_{02}^*\cGob \\
}
\end{split}
\end{equation}
and by the diagram (\ref{dgm00}) we have
\begin{equation}\label{eqn02}
	(u_0'\circ\Fr)(\ab_{01}^{(1)*}\cG') \subset \ab_{02}^*\cI_0^q\cGob \subset \ab_{02}^*\cI_0\cGo'.
\end{equation}
Thus we have the diagram
\begin{equation}\label{dgm2}
\begin{split}
\xymatrix{
	\RGac(X,G) \ar[r]^-{\RGa(\adj)} \ar[d]
		& \RGa(\Gamb,\aib^*j_!G) \ar[r]^{\RGa(j_!u)} \ar[d]
		& \RGa(\Gamb,\azb^*j_!G) \ar[r]^{\RGa(\Tr)} \ar[d]
		& \RGac(X,G) \ar[d] \\
	\RGa(\Xb,\cG') \ar[r]^-{\RGa(\adj)}     
		& \RGa(\Gamb,\aib^*\cG') \ar[r]^{\RGa(u')} 
		& \RGa(\Gamb,\azb^*\cG') \ar[r]^{\RGa(\Tr)}     
		& \RGa(\Xb,\cG') \lefteqn{.}
}
\end{split}
\end{equation}
We note that the composite of the top horizontal morphisms in the diagram (\ref{dgm2}) coincides with $u_!$
and that of the bottom horizontal morphisms is defined over $\Fq$.

We check the commutativity of the diagram (\ref{dgm2}).
The commutativity of the left and right squares follows from Proposition \ref{prop21}.
To show that the middle is commutative, it suffices to check the commutativity of the square
\begin{equation}\label{dgm21}
\begin{split}
\xymatrix{
	\aib^*j_!G \ar[r]^{j_!u} \ar[d] & \azb^*j_!G \ar[d] \\
	\aib^*\cG' \ar[r]^{u'} & \azb^*\cG'.
}
\end{split}
\end{equation}
In the following diagram
\[
\xymatrix{
	 & \aib^*j_!G \ar[ld] \ar[dd]|{\hole} \ar[rr] & & \azb^*j_!G \ar[ld] \ar[dd] \\
	\aib^*\Gb \ar[dd] \ar[rr] & & \azb^*\Gb \ar[dd] & \\
	 & \aib^*\cG' \ar[ld] \ar[rr]|(0.54){\hole} & & \azb^*\cG' \ar[ld]^{(*)} \\
	\aib^*\cGb \ar[rr] & & \azb^*\cGb ,& 
}
\]
all the squares in the above diagram except (\ref{dgm21}) are commutative
and the morphism $(*)$ is injective. 
Thus the commutativity of the diagram (\ref{dgm21}) follows.

Also we have the commutative diagram
\begin{equation}\label{dgm3}
\begin{split}
\xymatrix{
	\RGa(\Xb,\cG') \ar[r]^-{\RGa(\adj)} \ar[d]_{\RGa(\vP)}    
		& \RGa(\Gamb,\aib^*\cG') \ar[r]^{\RGa(u')} \ar[d]_{\RGa(\vP)}
		& \RGa(\Gamb,\azb^*\cG') \ar[r]^{\RGa(\Tr)} \ar[d]_{\RGa(\vP)}
		& \RGa(\Xb,\cG') \ar[d]_{\RGa(\vP)} \\
	\RGa(\Xb,\cG') \ar[r]^-{\RGa(\adj)}
		& \RGa(\Gamb,\aib^*\cG') \ar[r]^{\RGa(u')}
		& \RGa(\Gamb,\azb^*\cG') \ar[r]^{\RGa(\Tr)}
		& \RGa(\Xb,\cG') .
}
\end{split}
\end{equation}
The commutativity of the right and left squares follows from the commutative squares
\[
\xymatrix{
	f_{\Xob}^*\cGo' \ar[r]^-{\adj} \ar[d]_{\vP'}
		& {\aoib}_*\aoib^*f_{\Xob}^*\cGo' \ar[d]_{\vP'} \\
	\cGo' \ar[r]^-{\adj} & {\aoib}_*\aoib^*\cGo' \\
}
\]
and
\[
\xymatrix{
	{\aozb}_*\aozb^*f_{\Xob}^*\cGo' \ar[r]^-{\Tr} \ar[d]_{\vP'} & f_{\Xob}^*\cGo' \ar[d]_{\vP'} \\
	{\aozb}_*\aozb^*\cGo' \ar[r]^-{\Tr} & \cGo' \lefteqn{.}
}
\]
The commutativity of the middle of the diagram (\ref{dgm3}) follows from the commutative square
\[
\xymatrix{
	\aib^*\cGb \ar[r]^-{\ub \otimes \id} \ar[d]_{\vP} & \azb^*\cGb \ar[d]_{\vP}\\
	\aib^*\cGb \ar[r]^-{\ub \otimes \id} & \azb^*\cGb
}
\]
and the commutative diagram (\ref{dgm1}).

By \cite[Fonction $L$ mod.\ $\ell^n$, Lem.\ 3.3]{SGA4h}, we obtain the exact sequence
\begin{equation}\label{seq1}
\xymatrix{
	0 \ar[r] & j_!G \ar[r] & \cG' \ar[r]^{1-\vP} & \cG' \ar[r] & 0.
}
\end{equation}
which induces the following long exact sequence
\[
\xymatrixcolsep{3pc}\xymatrix{
	\cdots \ar[r] & H_c^i(X,G) \ar[r] & H^i(\Xb,\cG') \ar[r]^{1 - H^i(\vP)} & H^i(\Xb,\cG') \ar[r] & \cdots.
}
\]
Since $\Xb$ is proper over $k$ and $\cG'$ is a coherent $\cO_{\Xb}$-module sheaf,
the $k$-vector space $H^i(\Xb,\cG')$ is finite-dimensional for each $i$.
Hence by Lemma \ref{lem311}(1) the $p$-linear map $1 - H^i(\vP)$ is surjective.
Thus, for each index $i$, we have the exact sequence
\[
\xymatrixcolsep{3pc}\xymatrix{
	0 \ar[r] & H_c^i(X,G) \ar[r] & H^i(\Xb,\cG') \ar[r]^{1 - H^i(\vP)} & H^i(\Xb,\cG') \ar[r] & 0.
}
\]
By the diagram (\ref{dgm3}), we have $u' \circ \vP = \vP \circ u'$.
Therefore, by the diagram (\ref{dgm2}) and Lemma \ref{lem313}, we have
\begin{align}\label{eqn41}
\begin{split}
	\Tr((u \circ \Fr)_! \mid H_c^i(X,G)) &= \Tr(u_! \circ (\Fr_G)_! \mid H_c^i(X,G)) \\
		& = \Tr(u'_* \circ (\Fr_{\cG'})_* \mid H^i(\Xb,\cG')) \\
		&= \Tr((u' \circ \Fr)_* \mid H^i(\Xb,\cG')) .
\end{split}
\end{align}

On the other hand, since $d\Fr_{\Xb} = 0$, $\azb$ is \'etale and $\cG'$ is locally free of finite rank,
we can apply Theorem \ref{thm31} to $\Tr((u'\circ\Fr)_*) \mid \RGa(\Xb,\cG'))$:
\begin{equation}\label{eqn42}
	\Tr((u'\circ \Fr)_* \mid \RGa(\Xb,\cG')) = \sum_{z\in \Fix\ab^{(1)}}\Tr((u'\circ\Fr)_z \mid \cG'_z).
\end{equation}
We note that, since in this case the base scheme is $\Spec k$,
we can identify connected components of $\Fix \ab^{(1)}$ with closed points of $\Fix \ab^{(1)}$
by associating $\bt$ to the underlying space $z$ of $\bt$.\\
\begin{itemize}
\item
	If $z$ lies in $\Fix a^{(1)}$, then we have
	\[
		\Tr((u'\circ\Fr)_z \mid \cG'_z) = \Tr((u\circ\Fr)_z \mid G_z)
	\]
	by the definition of $u'$.
\item
	If $z$ does not belong to $\Fix a^{(1)}$, then $\Fr_\Gamb\circ\aib(z) = \azb(z) \in \Xb \smin X$ by Remark \ref{rmk42}(a).
	By the inclusion (\ref{eqn02}), $u'\circ\Fr$ is factorized as follows:
	\[
	\xymatrix{
		 & \azb^*\cI\cG' \ar@{^{(}->}[d]^{(**)} \\
		\aib^{(1)*}\cG' \ar@{-->}[ur] \ar[r]_{u'\circ\Fr} & \azb^*\cG'.
	}
	\]
	In the above diagram, the pull-back of $(**)$ by $i_z$ is zero.
	Thus we have $(u'\circ\Fr)_z = 0$ and $\Tr((u'\circ\Fr)_z \mid \cG'_z) = 0$.
\end{itemize}
Therefore, from (\ref{eqn41}), (\ref{eqn42}) and the above calculation of the local terms, we obtain Theorem \ref{thm41}.
\end{proof}

\end{section}

\begin{section}{Main theorem for the proper case}\label{sect7}

In this section we prove Theorem \ref{thm12} under the condition (iii).
%It contains the assumptions that schemes are proper and smooth over $k$.
We write $\sg_0$ (resp. $\sg$) for the Frobenius map on $\WnFq$ (resp. $\Wnk$).

\begin{thm}\label{thm51}
Let $X$ and $\Gam$ be proper smooth $k$-schemes defined over $\Fq$.
Let $a\cln \Gam\immto X\times X$ be a closed immersion defined over $\Fq$ such that $a_2$ is \'etale.
Let $K_0$ be a complex in $\Dperf(X_0,\Zoverpn)$.
We assume the following conditions:
\begin{itemize}
\item
	There exists a triplet of lifts $(\sX_0,\tGa_0,\ta_0)$ of $(X_0,\Gam_0,a_0)$ to $\WnFq$
	such that $\sX$ and $\tGa$ are smooth over $\Wnk$, $\ta$ is closed immersion, and $\ta_2$ is \'etale.
\item
	There exists a $\sg_0$-linear endomorphism $\vP_{\sX_0}\cln \cO_{\sX_0}\to\cO_{\sX_0}$
	which is a lift of the $p$-th power map $\vP_{X_0}\cln \cO_{X_0}\to\cO_{X_0}$.
\item
	$H^i(X,K)$ (resp. $H^i(\sX,K\ts_{\Z/p^n}\cO_\sX)$) is free over $\Z/p^n$ (resp. $\Wnk$) for each $i$.
\end{itemize}
Then there exists an integer $N$ such that,
for any $m \geq N$ and any $u_0 \in \Hom(a_{01}^*K_0,a_{02}^*K_0)$, we obtain
\[
	\Tr((u\circ \Fr^m)_* \mid \RGa (X, K)) = \sum_{z \in \Fix a^{(m)}} \Tr((u\circ\Fr^m)_z \mid K_z).
\]
\end{thm}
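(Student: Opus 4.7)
The plan is to mimic the strategy used for case (ii) in Section~\ref{sect6}, but with the ``Artin--Schreier'' coefficient sheaf $j_!G \hookrightarrow \cG'$ replaced by a $\Z/p^n$-level Artin--Schreier--Witt-type sequence built from the Frobenius lift $\vP_{\sX_0}$. Concretely, I form $\cK = K \otimes_{\Z/p^n} \cO_{\sX}$ in $\Dperf(\cO_\sX)$, which lives on the étale site of $\sX$ and restricts to $K$ on $X$. The $\sg_0$-linear endomorphism $\vP_{\sX_0}$ induces a $\sg$-semilinear endomorphism $\vP\cln \cK \to \cK$, and the first key input is an exact sequence $0 \to K \to \cK \xrightarrow{1-\vP} \cK \to 0$ on $\tXet$, obtained by reducing mod $p$ (where it becomes classical Artin--Schreier) and dévissage using the Frobenius lift; this uses only that $\vP$ lifts the $p$-th power map.

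Second, I take cohomology of this sequence on $\sX$. Lemma~\ref{lem311}(1) applied to $M = H^i(\sX,\cK)$ (which is finitely generated over $\Wnk$ by properness and perfectness of $\cK$) shows $1 - H^i(\vP)$ is surjective, so the long exact sequence splits into short exact sequences
\[
    0 \to H^i(X,K) \to H^i(\sX,\cK) \xrightarrow{1-\vP} H^i(\sX,\cK) \to 0,
\]
identifying $H^i(X,K) = H^i(\sX,\cK)^{1-\vP}$. Given $u_0$, I lift $u$ to $\tu\cln \ta_1^*\cK \to \ta_2^*\cK$ (as in Remark~\ref{rmk42}(b) and Section~\ref{sect6}, using \'etaleness of $\ta_2$ and the freeness from perfectness) and form $(\tu \circ \vP^{dm})_*$ on $H^i(\sX,\cK)$. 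The compatibility diagrams from Proposition~\ref{prop21} (together with functoriality of $\adj$ and $\Tr$) give that this operator restricts on $H^i(X,K) = H^i(\sX,\cK)^{1-\vP}$ to $(u\circ\Fr^m)_*$. By the freeness hypothesis on $H^i(X,K)$ and $H^i(\sX,\cK)$, Lemma~\ref{lem313} applies with $\vp = \tu$ and yields, for $m$ large enough,
\[
    \Tr\bigl((u\circ\Fr^m)_* \mid H^i(X,K)\bigr) = \Tr\bigl((\tu\circ\vP^{dm})_* \mid H^i(\sX,\cK)\bigr).
\]

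Third, I apply the Woods Hole formula (Theorem~\ref{thm31}) to $(\tu\circ\vP^{dm})_*$ on $\RGa(\sX,\cK)$. The required hypothesis $d(\ta_1^{(m)}) = 0$ is exactly the point addressed in the remark after Theorem~\ref{thm31}: since $\vP_\sX$ is a Frobenius lift, $d\vP_\sX^n = 0$ on $\Omega_{\sX/\Wnk}$, so this is automatic provided $m \geq n/d$ (in particular for $m \geq N$ with $N$ chosen larger than both this and the threshold from Lemma~\ref{lem313}). Woods Hole then equates the trace with a sum of local terms $\Tr_{\bt/S}\Tr(\tu_\bt \circ \vP^{dm}_\bt \mid \cK_\bt)$ over connected components of $\Fix\ta^{(m)}$, which correspond bijectively to points of $\Fix a^{(m)}$ (since $\Fix\ta^{(m)}$ is étale over $\Spec\Wnk$). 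At each such point $z$, the stalk sequence $0 \to K_z \to \cK_z \xrightarrow{1-\vP} \cK_z \to 0$ and a second application of Lemma~\ref{lem313} at the level of stalks identify $\Tr_{\bt/S}\Tr(\tu_\bt\circ\vP^{dm}_\bt \mid \cK_\bt)$ with $\Tr((u\circ\Fr^m)_z \mid K_z)$, after enlarging $N$ if necessary.

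The main obstacle I expect is the careful bookkeeping of the comparison between $\mathbb{Z}/p^n$-étale constructions and $\cO_\sX$-module constructions at the level of the correspondence $\ta$, parallel to what Proposition~\ref{prop21} achieves in the easier setting. In particular, one must produce $\tu$ so that $\tu \otimes \mathrm{id}$ reduces to $u$ on $\Gam$ and so that the analogues of diagrams (\ref{dgm2}) and (\ref{dgm3}) from Section~\ref{sect6} commute, now for perfect complexes and at $\Z/p^n$-level rather than $\Fp$-level. A secondary technicality is ensuring exactness of $0 \to K \to \cK \xrightarrow{1-\vP} \cK \to 0$ for $K$ a complex (not merely a sheaf), which requires a derived-tensor-product justification compatible with the perfect-complex hypothesis.
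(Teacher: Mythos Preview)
Your strategy is the same as the paper's: tensor $K$ with $\cO_\sX$ to form $\cK$, use an Artin--Schreier--Witt sequence and Lemma~\ref{lem311}(1) to identify $H^i(X,K)$ with $H^i(\sX,\cK)^{1-\vP}$, invoke Lemma~\ref{lem313} to pass to traces on $H^i(\sX,\cK)$, and then apply the Woods Hole formula (Theorem~\ref{thm31}). Two points, however, are handled more cleanly in the paper than in your outline.

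First, your mechanism for producing $\tu$ is misdirected. Remark~\ref{rmk42}(b) concerns extending a morphism of \emph{smooth} \'etale sheaves across a \emph{dense open} immersion; neither feature is present here. The paper instead uses that $\Spec k \hookrightarrow \Spec\Wnk$ is finite, radiciel and surjective, so the \'etale topoi of $\sX$ and $X$ (and of $\tGa$ and $\Gam$) are canonically equivalent. One transports $K_0$ and $u_0$ to $\sX_0$ and $\tGa_0$ for free via this equivalence, and then simply sets $u' = u \otimes \id_{\cO_{\tGa}}\colon \ta_1^*\cK \to \ta_2^*\cK$. No lifting argument in the style of Section~\ref{sect6} is needed, and your anticipated ``main obstacle'' (the analogues of diagrams~(\ref{dgm2}) and~(\ref{dgm3})) dissolves: the relevant commutativities are immediate from Proposition~\ref{prop21} and the fact that $u' = u\otimes\id$ visibly commutes with $\id\otimes\vP$.

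Second, your proposed ``second application of Lemma~\ref{lem313} at the level of stalks'' is unnecessary and introduces a spurious dependence of $N$ on the (varying with $m$) fixed-point set. Each connected component $\bt$ of $\Fix\ta^{(m)}$ is \'etale over $\Spec\Wnk$, hence isomorphic to $\Spec\Wnk$; under this identification $\cK_\bt \simeq K_z \otimes_{\Z/p^n} \Wnk$ and $(u'\circ\Fr^m)_\bt = (u\circ\Fr^m)_z \otimes \id$, so $\Tr_{\bt/S}\bigl(\Tr((u'\circ\Fr^m)_\bt\mid\cK_\bt)\bigr) = \Tr((u\circ\Fr^m)_z\mid K_z)$ on the nose, for every $m$. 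The paper records this identification in a single line.

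Finally, your worry about exactness of $0\to K\to\cK\to\cK\to 0$ for complexes is also simpler than you suggest: since $\sX$ is smooth over $\Wnk$, the sheaf $\cO_\sX$ is flat over $\Z/p^n$, so tensoring the Artin--Schreier--Witt sequence $0\to\Z/p^n\to\cO_\sX\xrightarrow{1-\vP}\cO_\sX\to 0$ with $K$ yields a genuine distinguished triangle without any derived-tensor subtlety.
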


\begin{proof}	% start of the proof of Theorem 5.1.
We put $\sS = \Spec\Wnk$.
Since the morphism $\Spec k \to \sS$ is surjective, radiciel and finite,
the pull-back functor from the \'etale topos of $\sX$ (resp. $\tGa$) 
to the \'etale topos of $X$ (resp. $\Gam$) is an equivalence of categories
and $D(\sS,\Zoverpn)$ is naturally identified with $D(\Zoverpn)$.
Let $\tK_0$ be the complex on $\sX_0$ such that ${\tK_0}|_{X_0} = K_0$
and $\tu_0\cln \ta_{01}^*\tK_0 \to \ta_{02}^*\tK_0$ the morphism such that ${\tu_0}|_{\Gam_0} = u_0$.
Then we have
\[
	\Tr((u\circ \Fr^m)_* \mid \RGa(X,K)) = \Tr((\tu\circ \Fr^m)_* \mid \RGa(\sX,\tK))
\]
and
\[
	\Tr((u\circ\Fr^m)_z \mid K_z) = \Tr({(\tu\circ\Fr^m)}_{\tz} \mid {\tK}_{\tz}),
\]
for any $z \in \Fix a^{(m)}$, where $\tz$ is the element of $\Fix \ta^{(m)}$ corresponding to $z$.
Hence it suffices to show the formula of Theorem \ref{thm51} for $(\sX,\tGa,\ta)$.

For simplicity, we replace the symbols $(\sS,\sX,\tGa,\ta,\tu)$ with $(S,X,\Gam,a,u)$.

By the existence of $\vP_{X_0}$, we have the short exact sequence
\[
\xymatrix{
	0 \ar[r] & \Z/p^n \ar[r] & \cO_X \ar[r]^{1-\vP} & \cO_X \ar[r] & 0 \lefteqn{.}
}
\]
We put $\cK = K\otimes_\Zoverpn\cO_X$.
Tensoring this diagram with $K$ over $\Z/p^n$ and acting $\RGa(X,-)$, we have the distinguished triangle
\[
\xymatrix{
	\RGa(X,K) \ar[r] & \RGa(X,\cK) \ar[r]^{1-\vP} & \RGa(X,\cK) \ar[r] & \RGa(X,K)[1] \lefteqn{.}
}
\]
By Lemma \ref{lem311} (1), we have the short exact sequence
\[
\xymatrix{
	0 \ar[r] & H^i(X,K) \ar[r] & H^i(X,\cK) \ar[r]^{1-H^i(\vP)} & H^i(X,\cK) \ar[r] & 0 \lefteqn{.}
}
\]

We put $f_{X_0} = (\id, \vP_{X_0})\cln (X_0,\cO_{X_0}) \to (X_0, \cO_{X_0}).$
Let $\vP'\cln f_{X_0}^*\cO_{X_0} \to \cO_{X_0}$ be the $\cO_{X_0}$-linear homomorphism associated to $\vP_{X_0}$.
We write $\Fr_{\cO_X}$ for the pull back of $\vP'^d$ by $\WnFq \to \Wnk$
and define $\Fr_\cK = \Fr_K \ts \Fr_{\cO_X}$.
We put $u'=u\ts\id_{\cO_\Gam}:a_1^*\cK\to a_2^*\cK$.
By Lemma \ref{lem313}, we can take an integer $N$ such that for any $m \geq N$ we have
\begin{align*}
	\Tr((u\circ\Fr^m)_* \mid H^i(X,K))
	&= \Tr(u_*\circ (\Fr_K)_*^m \mid H^i(X,K)) \\
	&= \Tr(u'_* \circ (\Fr_\cK)_*^m \mid H^i(X,\cK)) \\
	&= \Tr((u'\circ\Fr^m)_* \mid H^i(X,\cK)).
\end{align*}
Taking the alternating sum and applying Theorem \ref{thm31}, we obtain
\begin{align*}
	\Tr((u\circ\Fr^m)_* \mid \RGa(X,K))
		&= \Tr((u'\circ\Fr^m)_* \mid \RGa(X,\cK)) \\
		&= \sum_{\bt\in\pi_0(\Fix a^{(m)})} \Tr((u'\circ\Fr^m)_\bt \mid \cK_\bt) \\
		&= \sum_{z\in\Fix a^{(m)}} \Tr((u\circ\Fr^m)_z \mid K_z) \lefteqn{.}
\end{align*}
Therefore Theorem \ref{thm51} holds.
\end{proof}
\end{section}

\begin{section}{Examples}\label{sect8}
We will see three examples.
At first, we see an easy example that we can show Theorem \ref{thm12} by direct calculation.
Secondly, we apply this theorem to elliptic curves over a finite field.
Finally, we see that this theorem would be false if coefficients are $p$-adic.

\begin{exam}
We see an example of the affine line over a finite field.
In the notation of Theorem \ref{thm41}, we put
\begin{itemize}\label{exam81}
\item
	$m=1$,
\item
	$X_0=\AFp$, $\Xob=\PFp$,
\item
	$a_0=(g_0,\id)\cln \AFp \to \AFp\times_{\Fq}\AFp$,
	where $g_0\cln\AFp\to\AFp$ is defined by $x\mapsto x+1$,
\item
	$\ab_0=(\gb_0,\id)$, where $\gb_0\cln\PFp\to\PFp$ is a natural extension of $g_0$,
\item
	$\Gob = \Z/p$, $\ub_0=\id$.
\end{itemize}

We have $H_c^i(\Ak,\Z/p) = 0$ for each $i$.
Hence the left-hand side of the formula in Theorem \ref{thm41} is zero.

On the other hand, the right-hand side coincides with 
the number of fixed points of $g\circ\Fr$,
that is, the number of roots of the equation $x^q+1=x$ in $k$.
It exactly equals $q$, hence it is zero modulo $p$.
\end{exam}

\begin{exam}
We show an easy application of Theorem \ref{thm41}.
Let $E_0$ be an elliptic curve over $\Fq$ and $O$ the origin of $E_0$.
Let $g_0$ be the isogeny on $E_0$.
In the notation of Theorem \ref{thm41}, we put
\begin{itemize}
\item
	$m=1$,
\item
	$\Xob=E_0$, $ X_0=E_0\smin \{O\}$,
\item
	$\Gam_0 = g_0^{-1}(X_0)$, $j_0 \cln \Gam_0 \immto X_0$, $g_0' = g_0|_{\Gam_0}$,
\item
	$\ab_0 = (g_0,\id)$, $a_0 = (g_0',j_0)$,
\item
	$\Gob = \Z/p$, $\ub_0 = \id$.
\end{itemize}
Then the above data satisfy the condition of Theorem \ref{thm41}.
The cohomology groups with proper supports of $X$ are as follows:
\[
	H_c^i(X,\Z/p) = 
	\begin{cases}
		0 & i=0 \\
		H^i(E,\Z/p) & \text{otherwise}.
	\end{cases}
\]
By using the Artin-Schreier sequence, we can compute $H^i(E,\Z/p)=0$ for any integer $i\geq 2$.
On the other hand, we can show that
\[
	\Fix a^{(1)} = \Fix(\Fr_E \circ g) \smin \{O\}.
\]
Thus, by applying Theorem \ref{thm41}, we have
\[
	- \Tr((\Fr_E\circ g)^* \mid H^1(E,\Z/p)) \equiv \#\Fix(\Fr_E \circ g) - 1 \pmod{p}.
\]
We fix a prime number $\ell \neq p$.
Using the Lefschetz trace formula for $\ell$-adic \'etale cohomology, we can show that
\[
	- \Tr((\Fr_E\circ g)^* \mid H^1(E,\Ql)) \equiv \#\Fix(\Fr_E \circ g) - 1 \pmod{p}.
\]
Note that $\Tr((\Fr_E\circ g)^* \mid H^1(E,\Ql))$ is an integer.
Thus we obtain
\[
	\Tr((\Fr_E\circ g)^* \mid H^1(E,\Z/p)) \equiv \Tr((\Fr_E\circ g)^* \mid H^1(E,\Ql)) \pmod{p}.
\]

If $E_0$ is supersingular, the Artin-Schreier sequence shows that $H^1(E,\Z/p)=0$.
Thus we have
\[
	\#\Fix(\Fr_E \circ g) \equiv 1 \pmod{p}
\]
and
\[
	\Tr((\Fr_E\circ g)^* \mid H^1(E,\Ql)) \equiv 0 \pmod{p}.
\]
\end{exam}

\begin{exam}
We remark that Deligne's conjecture seems to be false if coefficients are $p$-adic.
For example, in Example \ref{exam81}, we take $\Gob = \Z_p$.
We note that
\[
	H_c^i(\Ak,\Z/p^n) = 0
\]
for each $n$ and each $i$.
We define $H_c^i(\Ak,\Z_p):=\varprojlim_{n}H_c^i(\Ak,\Z/p^n)$.
Then we have $H_c^i(\Ak,\Z_p)=0$.
Hence the equation
\[
	\sum_i (-1)^i\Tr((\Fr^m)^* \mid H_c^i(\Ak,\Z_p)) = 0
\]
holds for each integer $m \geq 1$. However, we have
\[
	\# \{ x\in\Ak \mid x^{q^m} + 1 = x \} = q^m,
\]
which is not zero in $\Z_p$.
\end{exam}
\end{section}


\begin{thebibliography}{999999}

\bibitem[DL]{DL}
	P. Deligne, G. Lusztig,
	\textit{Representations of reductive groups over finite fields},
	Ann. of Math. {\bf 103} (1976), 103--161.
\bibitem[Ek]{Ek}
	T. Ekedahl,
	\textit{On the adic formalism},
	The Grothendieck Festschrift, Vol. II, Progr. Math. {\bf 87} (1990), Birkh\"auser Boston, Boston, MA, 197--218.
% \bibitem[EK]{EK}
% 	M. Emerton, M. Kisin, 
% 	\textit{An introduction to the Riemann-Hilbert correspondence for unit F-crystals}, 
% 	Geometric aspects of Dwork theory. Vol. I, II,  677--700, Walter de Gruyter GmbH \& Co. KG, Berlin, 2004.
\bibitem[Fu]{Fu}
	K. Fujiwara,
	\textit{Rigid geometry, Lefschetz-Verdier trace formula and Deligne's conjecture},
	Inv. Math. {\bf 127} (1997), 489--533.
\bibitem[HT]{HT}
	M. Harris, R. Taylor,
	\textit{The geometry and cohomology of some simple Shimura varieties},
	Ann. of Math. Studies {\bf 151},
	Princeton Univ. Press, 2001.
\bibitem[Ha]{Ha}
	R. Hartshorne,
	\textit{Residues and Duality},
	Lecture Notes in Math. {\bf20},
	Springer-Verlag, Berlin-New York, 1966.
%\bibitem[Ka]{Ka}
%	N. Katz, 
%	\textit{p-adic properties of modular schemes and modular forms}, 
%	Modular functions of one variable, III, 
%	Lecture Notes in Math. {\bf 350}, 69--190,
%	Springer-Verlag, Berlin-New York, 1973.
\bibitem[La]{Laf}
	L. Lafforgue,
	\textit{Chtoucas de Drinfeld et correspondance de Langlands},
	Inv. Math. {\bf 147} (2002), no. 1, 1--241.
\bibitem[EGA]{EGA}
	A. Grothendieck, J. Dieudonn\'e, 
	\textit{\'El\'ements de G\'eom\'etrie Alg\'ebrique},
	Publ. Math. IHES. {\bf 4} (1960), {\bf 8} (1961), {\bf 11} (1961), {\bf 17} (1963), {\bf 20} (1964), {\bf 24} (1965), {\bf 28} (1966), {\bf 32} (1967).
\bibitem[SGA4]{SGA4}
	\textit{Th\'eorie des topos et cohomologie \'etale des Sch\'emas},
	Lecture Notes in Math. {\bf 269} (1972), {\bf 270} (1972), {\bf 305} (1973),
	Springer-Verlag,
	Berlin-New York.
\bibitem[SGA4$\frac{1}{2}$]{SGA4h}
	P. Deligne, 
	\textit{Cohomologie \'etale}, 
	Lecture Notes in Math. {\bf 569},
	Springer-Verlag,
	Berlin-New York, 1977.
\bibitem[SGA5]{SGA5}
	\textit{Cohomologie $\ell$-adique et fonctions L}, 
	Lecture Notes in Math. {\bf 589},
	Springer-Verlag,
	Berlin-New York, 1977.
\bibitem[SGA6]{SGA6}
	\textit{Th\'eorie des intersections et th\'eor\`eme de Riemann-Roch},
	Lecture Notes in Math. {\bf 225},
	Springer-Verlag,
	Berlin-New York, 1971.
\bibitem[SGA7]{SGA7}
	\textit{Groupes de monodromie en g\'eom\'etrie alg\'ebrique},
	Lecture Notes in Math. {\bf 288} (1972), {\bf 340} (1973),
	Springer-Verlag,
	Berlin-New York.
\end{thebibliography}
\end{document}